\DeclareSymbolFont{bbold}{U}{bbold}{m}{n}
\DeclareSymbolFontAlphabet{\mathbbold}{bbold}
\DeclareMathOperator{\Aut}{{\bf Aut}}
\DeclareMathOperator{\End}{{\bf End}}
\DeclareMathOperator{\ncl}{{\bf ncl}}
\DeclareMathOperator{\AK}{{AK}}
\DeclareMathOperator{\AC}{{AC}}
\DeclareMathOperator{\Loop}{{Loop}}
\DeclareMathOperator{\Fold}{{Fold}}
\DeclareMathOperator{\Shift}{{Shift}}
\DeclareMathOperator{\Id}{{Id}}
\DeclareMathOperator{\NormForm}{{NF}}
\newcommand{\gpr}[2]{{\left\langle #1 \mid #2 \right\rangle}}
\newcommand{\rb}[1]{{\left( #1 \right)}}
\newcommand{\Set}[2]{\left\{\, #1 \;\middle|\; #2 \,\right\}}
\newcommand{\NF}[1]{{\NormForm(#1)}}
\def\MN{{\mathbb{N}}}
\def\MZ{{\mathbb{Z}}}
\def\MR{{\mathbb{R}}}
\def\ovu{{\overline{u}}}
\def\ovv{{\overline{v}}}
\def\One{{\mathbbold{1}}}
\newcommand{\EP}{\mathbb{R}^2}
\newcommand{\CB}{{\mathcal B}}
\newcommand{\CC}{{\mathcal C}}
\newtheorem{thm}{Theorem}[section]
\newtheorem{cor}[thm]{Corollary}
\newtheorem{lem}[thm]{Lemma}
\newtheorem{prop}[thm]{Proposition}
\newtheorem*{theorem*}{Theorem}
\newtheorem*{lemma*}{Lemma}
\newtheorem*{proposition*}{Proposition}
\newtheorem*{conj*}{Conjecture}
\author[D. Panteleev]{Dmitry Panteleev}
\author[A. Ushakov]{Alexander Ushakov}
\address{Department of Mathematics, Stevens Institute of Technology, Hoboken, NJ, USA}
\email{dpantel1,aushakov@stevens.edu}
\thanks{The second author has been partially supported by NSF grant DMS-1318716}
\title[On Andrews--Curtis conjecture]{Conjugacy search problem and the Andrews--Curtis conjecture}
\date{\today}
\begin{document}
\maketitle

\begin{abstract}
We develop new computational methods for studying
potential counterexamples to the Andrews--Curtis conjecture,
in particular, Akbulut--Kurby examples AK(n).
We devise a number of algorithms in an attempt to disprove
the most interesting counterexample AK(3).
To improve metric properties of the search space
(which is a set of balanced presentations of $\One$)
we introduce a new transformation (called an ACM-move here) that generalizes
the original Andrews-Curtis transformations and discuss details of a practical implementation.
To reduce growth of the search space we introduce a strong equivalence
relation on balanced presentations and study the space modulo
automorphisms of the underlying free group.
Finally, we prove that automorphism-moves can be applied to
AK(n)-presentations.
Unfortunately, despite a lot of effort we were unable to trivialize
any of AK(n)-presentations, for $n>2$.
\\
\noindent
\textbf{Keywords.}
Andrews-Curtis conjecture, Akbulut-Kurby presentations, trivial group,
conjugacy search problem, computations.

\noindent
\textbf{2010 Mathematics Subject Classification.} 20-04, 20F05, 20E05.
\end{abstract}

\section{Introduction}

The Andrews--Curtis conjecture (AC-conjecture, or ACC)
is a long-standing open problem in low-dimensional
topology and combinatorial group theory. It was proposed
by Andrews and Curtis in \cite{Andrews-Curtis:1965} while categorizing possible
counterexamples to the Poincar\'e conjecture. Later, Wright in
\cite{wright1975group} formulated an equivalent conjecture about $3$-deformations
of 2-CW-complexes associated with all finitely presented groups, thus showing that
Zeeman conjecture \cite{zeeman1963dunce} implies AC-conjecture.
It is known that Zemman conjecture also implies the Poincar\'e conjecture and
is implied by Poincar\'e in some cases, e.g. \cite{gillman1983zeeman}.
Despite recent progress and solution to Poincar\'e conjecture,
validity of the AC-conjecture remains open.

Although most of motivating examples come from topology,
the conjecture is usually formulated in the language of
combinatorial group theory, as a question of equivalence of
presentations of the trivial group.
In this paper we use the language of combinatorial group theory,
omitting any topological aspects of the problem.

\subsection{Balanced presentations of the trivial group}

Let $X=\{x_1,\ldots,x_n\}$, $F=F(X)$ be the free group on $X$, and $R$
a finite subset of $F$. The normal closure of $R$, denoted by $\ncl(R)$,
is the smallest normal subgroup in $F(X)$ containing $R$.
A pair $(X;R)$ defines a quotient group $F/\ncl(R)$,
denoted by $\gpr{X}{R}$, and is called a \emph{presentation}
of $\gpr{X}{R}$. The sum $\sum_{r\in R} |r|$ is called the
\emph{total length} of the presentation $(X;R)$ and is denoted by $L(R)$.
We say that $R \subseteq F(X)$ is \emph{symmetrized}
if $R$ contains only cyclically reduced words
and is closed under taking inverses and cyclic permutations.
Denote by $R^{\star}$ the minimal symmetrized set containing $R$ (with all the words cyclically reduced). A presentation $(X;R)$ is \emph{symmetrized} if $R = R^{\star}$. A finite presentation can be efficiently symmetrized and symmetrization does not change the computational properties of the fundamental problems.

We say that a group presentation $(X;R)$
is \emph{balanced} if $|X|=|R|$.
Some group presentations define the trivial group $\One$.
The ``most trivial'' presentation of $\One$ on generators $\{x_1,\ldots,x_n\}$
is, of course,
$(x_1,\ldots,x_n\  ;\ x_1,\ldots,x_n)$ called the
\emph{canonical presentation} of $\One$ on $\{x_1,\ldots,x_n\}$.
Define a set $\CB_n \subseteq F_n^n$ of balanced relator-tuples
of the trivial group:
$$
\CB_n = \Set{(r_1,\ldots,r_n)}{\ncl(r_1,\ldots,r_n)=F_n}.
$$
We use vector notation for tuples in $F_n^n$.
The problem of deciding if $(X;R)$ defines
the trivial group is undecidable (see \cite{Novikov:1955,Boone:1958}).
It is an open problem if the same is true for balanced presentations
(see Magnus' problem \cite[Problem 1.12]{Kourovka}).

\subsection{Transformations of group presentations}

There are several types of transformations
that for a general group presentation $(x_1,\ldots,x_n;r_1,\ldots,r_n)$
produce a new presentation $(x_1,\ldots,x_n;r_1',\ldots,r_n')$
on the same set of generators of the same group.
The Andrews-Curtis transformations $AC_1,AC_2,AC_3$
(or simply AC-moves) are of that type:
\begin{enumerate}[leftmargin=1in]
\item[($AC_1$)] $r_i \to r_i r_j$ for $i\neq j$,
\item[($AC_2$)] $r_i \to r_i^{-1}$,
\item[($AC_3$)] $r_i \to w^{-1} r_i w$ for some $w \in F_n$.
\end{enumerate}
The transformations $AC_1,AC_2$ can be recognized as Nielsen transformations
of the tuple $(r_1,\ldots,r_n)$ and the transformation $AC_3$
is a conjugation of any element in a tuple.
Since the AC-moves are invertible,
we can say that $\ovu$ and $\ovv$ are \textbf{AC-equivalent}
(and write $\ovu\sim_{AC}\ovv$)
if there exists a sequence of AC-moves transforming $\ovu$ into $\ovv$.

More generally, a transformation (named here an ACM-move) that
replaces a single element $u_i$
in $\ovu$ with an element $u_i'$ satisfying:
$$
u_i^{\pm 1} \sim u_i' \mbox{ in } \gpr{x_1,\ldots,x_n}{u_1,\ldots,u_{i-1},u_{i+1},\ldots,u_n}
$$
produces an isomorphic presentation.
It is easy to see that AC-moves are particular types of
the ACM-move.
Also, the ACM-move can be recognized as a slightly generalized $M$-transformation of
\cite{Burns-Macedonska:1992}.
It is easy to see that $\ovu$ can be transformed
to $\ovv$ by AC-moves if and only if the same can done by ACM-moves.
Therefore, to check AC-equivalence one can use ACM-moves.

Yet another transformation of a group presentation that does not change the group is
an \emph{automorphism move},
which is an application of $\varphi\in\Aut(F_n)$ to every component of $\ovu$.
It is not known if the system of AC-moves with automorphism-moves
is equivalent to the system of AC-moves.
More on automorphism-moves in Section \ref{se:automorphisms}.

\subsection{The conjecture}
\label{se:conjecture}

Denote by $\CC$ the set of all tuples that can be obtained from the
canonical tuple by a sequence of AC-moves. More generally, for
$\ovu\in\CB_n$ denote by $\CC_\ovu$ the set of tuples in $\CB_n$
$\AC$-equivalent to $\ovu$.

The \emph{Andrews-Curtis conjecture}
\cite{Andrews-Curtis:1966} states that $\CC=\CB_n$, i.e.,
every balanced presentation of the trivial group
can be converted to the canonical presentation
by a sequence of AC-moves.

Despite nearly $50$ years of research the conjecture is still open.
It is widely believed that the Andrews–Curtis conjecture is false
with most theoretic works attempting to disprove it.
A common approach is to fix some group $G$,
a homomorphism $\varphi:F_n\to G$,
and investigate if for any $\ovu\in\CB_n$
there exists a sequence of AC-moves taking $\varphi(\ovu)$
into $(\varphi(x_1), \ldots, \varphi(x_n))$.
Clearly, if the answer is negative for some choice of $G$ and $\varphi$,
then the original conjecture does not hold.
Several classes of groups were investigated that way,
e.g., solvable groups \cite{Miasnikov:1984},
finite groups \cite{Borovik-Lubotzky-Myasnikov:2005},
the Grigorchuk group \cite{Myropolska:2013},
but the (negative) answer is not found.

\subsection{Potential counterexamples}

A big obstacle towards the solution of the problem is that
there is no algorithm to test if a particular
balanced presentation of the trivial group satisfies the conjecture,
or not.
There is a number of particular balanced presentations
that are not known to satisfy the conjecture.
\begin{itemize}
\item
Akbulut--Kurby examples: $\AK(n)=\gpr{x,y}{xyx=yxy,\ x^{n+1}=y^n}$ for $n\ge 3$.
\item
Miller--Schupp examples:
$\gpr{x,y}{x^{-1} y^2 x = y^3,\ x=w},$
where $w$  has exponent sum $0$ on $x$.
\item
B.~H.~Neumann example $\gpr{x,y,z}{z^{-1} y z=y^2,\ x^{-1}zx=z^2,\ y^{-1}xy=x^2}$.
\end{itemize}
These examples are referred to as \emph{potential counterexamples} to ACC.
More examples of balanced presentations of $\One$ (known to be AC-equivalent
to the canonical presentation) can be found
in \cite{Ivanov:2006, lishak2015balanced, bridson2015complexity}.
It was shown in \cite{Miasnikov99geneticalgorithms}
by means of a computer experiment
that there are no counterexamples
of total length $12$ or less. Later it was shown
in \cite{HavasRamsay03} that every balanced presentation
of total length $13$ is either AC-equivalent to the canonical
presentation or to
$$
\AK(3) = \langle x, y \mid x^3 = y^4, xyx = yxy\rangle,
$$
which makes $\AK(3)$ the shortest potential counterexample.





\subsection{Computational approach to disproving a counterexample}
\label{se:computational_AC}

To check if a given tuple $\ovu$ is $\AC$-equivalent to the canonical presentation,
one can enumerate equivalent presentations (by applying AC-moves)
until the canonical presentation is found
(see \cite{Miasnikov99geneticalgorithms,HavasRamsay03,bowman2006}).
There are several general computational problems associated with that approach
that we would like to mention here:
\begin{itemize}
\item
$\CC_\ovu$ is infinite and there is no terminating condition
which allows an enumeration procedure to stop with a negative answer.
The enumeration procedure can only terminate with a positive answer
when it finds the canonical presentation.
\item
Lengths of tuples are unbounded.
\item
$\CC_\ovu$ has exponential growth.
\end{itemize}
To alleviate some of the problems
one can bound the lengths of the words in tuples by some constant $L$ and
do not process a tuple $\ovv$ which is $\AC$-equivalent to the given $\ovu$
if $\ovv$ contains a (cyclic) word of length greater than $L$.
This approach (used in \cite{HavasRamsay03,bowman2006}) allows to use fixed memory slots for words
and makes the search space finite.
Also, it is a good heuristic to process shorter tuples first.

In this paper we consider the case $n=2$ only.
We use compact memory representation for balanced pairs $(u,v)\in\CB_2$.
We represent each letter by a $2$-bit number, thus packing $32$ letters into
a $64$-bit machine-word. This approach saves memory and allows to
implement operations such as a cyclic shift in just a few processor
instructions, compared to a usual approach which includes
several memory writes.

\subsection{Our work}

In this paper we develop new efficient techniques to enhance algorithmic search in $\CC$
(or $\CC_\ovu$).
Our work is similar to previous computational investigations of ACC, but goes much further.
The presentation $\AK(3)$ is the main object of study and
most of the algorithms are tested on $\AK(3)$. Our big goal was to
prove that $\AK(3)$ is not a counterexample, i.e., it satisfies ACC.
Unfortunately, we were not able to achieve our goal.
Below we list the key features of our work.

\begin{itemize}
\item
In Section \ref{se:newmove} we show that ACM-moves
can be used in practice. Notice that for $n=2$
that requires enumerating (short!) conjugates in a one relator group
for a given element.
The later problem does not have an efficient solution as of now.
It is not even known if the conjugacy problem is decidable
or not in one relator groups.
Based on techniques described in \cite{ushakov2011non} we design
a heuristic procedure enumerating short conjugates
and discuss the details of implementation.
\item
We prove in Section \ref{se:automorphisms} that automorphism-moves
can be used with the AC-moves
for Akbulut-Kurby presentations $\AK(n)$, regardless of whether the conjecture holds
for $\AK(n)$.
\item
In Section \ref{se:equivalence_rel} we introduce an equivalence relation
$\sim$ on pairs in $\CB_2$ and define normal forms for the equivalence classes.
We show that in practice equivalence of two pairs can be checked and normal forms computed.
That allows us to work with the quotient space $\CB_2/\sim$ which
elements are (infinite) equivalence classes.
Working in $\CB_2/\sim$ we work with large blocks of elements from $\CB_2$.
Thus, we can say that
the space $\CB_2/\sim$ is much smaller than $\CB_2$,
even though both sets are infinite countable.
\item
In Section \ref{se:org_groups} we use heuristics to investigate if
certain properties of one-relator groups described
in \cite{bridson2015complexity} and \cite{lishak2015balanced}
could be the reason of our unsuccessful search of trivialization for $\AK(3)$.
\item
In Section \ref{se:results} we present results of our experiments.
\end{itemize}

\section{ACM-move}
\label{se:newmove}

In this section we describe our implementation of the ACM-move, i.e.,
an algorithm which for a given pair $u,v\in F=F(x,y)$
constructs a subset of the set:
$$
U = U(u,v)= \Set{u'\in F(x,y) }{u'\sim u \mbox{ in } \gpr{x,y}{v} \mbox{ and } |u'|\le L},
$$
where $L\in\MN$  is a fixed parameter value.
Ideally, the algorithm should construct the whole set $U(u,v)$.
The algorithm is based on \emph{weighted $X$-digraphs}.

\subsection{Weighted $X$-digraphs}

Formally, a weighted $X$-digraph is a tuple $(V,E,\mu,\gamma)$
where $(V,E)$ defines a directed graph, $\mu:E\to X^\pm$ is the
\emph{labeling function}, and $\gamma:E\to\MZ$ is the \emph{weight function}.

We often use the following notation $a \stackrel{x,k}{\to} b$
for the edge with origin $a$, terminus $b$, label $x$,
and weight $k$.
We say that an edge $b \stackrel{x^{-1},-k}{\to} a$ is the inverse to
$e=a \stackrel{x,k}{\to} b$ and denote it by $e^{-1}$.
We say that a weighted $X$-digraph $\Gamma$ is:
\begin{itemize}
\item
\emph{folded}, if for every $a\in V$ and $x\in X^\pm$ there exists
at most one edge with the origin $a$ labeled with $x$;
\item
\emph{inverse}, if with every edge $e$ the graph $\Gamma$ contains $e^{-1}$;
\item
\emph{rooted}, if $\Gamma$ comes with a designed vertex called the \emph{root}.
\end{itemize}
A \emph{path} $p$ in $\Gamma$ is a sequence of adjacent edges $e_1\ldots e_n$,
its label is $\mu(p)=\mu(e_1)\ldots\mu(e_k)$ and the weight $\gamma(p)=\gamma(e_1)+\ldots+\gamma(e_k)$.
A \emph{circuit} is a path with the same origin and terminus.

An inverse weighted labeled digraph $\Gamma$ with a root $v_0$ and a number
$N \in \mathbb{N} \cup \{\infty\}$ is called a {\em pseudo conjugacy graph} for $u$
in $G = \langle x, y\mid v\rangle$ if the following conditions are satisfied:
\begin{itemize}
\item[(CG1)]
$\mu(l) \sim_v u^{\gamma(l)}$ for any circuit $l$.
\item[(CG2)]
$N = \infty$ or $u^N = 1$ in $G$.
\end{itemize}
The simplest nontrivial example of a pseudo conjugacy graph $\Loop(u)$ for $u$
in $\gpr{x,y}{v}$ is shown in Fig.~\ref{fig:loop1}.
\begin{figure}[h]
	\centerline{\includegraphics[width=1.5in]{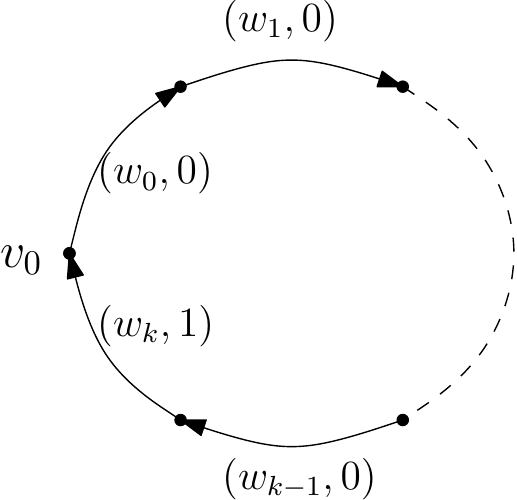}}
	\caption{A pseudo conjugacy graph $\Loop(u)$.}
    \label{fig:loop1}
\end{figure}

To implement ACM-move we generate a sufficiently large pseudo-conjugacy
graph $\Gamma$ for $u$ and then ``harvest'' circuits of weight $1$.
A large pseudo-conjugacy graph $\Gamma$ can be generated
starting with $\Loop(u)$ and applying \emph{$R$-completion}
procedure $D$ times.
$R$-completion is a variation of coset enumeration
first described in \cite{Ushakov:thesis,ushakov2011non}
and reviewed for more precise complexity bounds below.
Harvest is shortly discussed in Section \ref{se:harvest}.

\subsection{Operations on weighted $X$-digraphs}
\label{se:graph_operations}

Here we shortly describe several operations with graphs used later in the sequel.
\subsubsection{Vertex-identification}
Given a folded $X$-digraph $\Gamma$ and distinct $v_1,v_2\in V(\Gamma)$ define
a graph $\Id(\Gamma,v_1,v_2)$ obtained from $\Gamma$ as follows:
\begin{itemize}
\item
add a new vertex $v$ to $V(\Gamma)$;
\item
for each edge $v_i\stackrel{x,a}{\to} u$ add an edge $v\stackrel{x,a}{\to} u$;
\item
for each edge $u\stackrel{x,a}{\to} v_i$ add an edge $u\stackrel{x,a}{\to} v$;
\item
remove $v_1$ and $v_2$.
\end{itemize}
In general, the graph $\Id(\Gamma,v_1,v_2)$ is not folded.

\subsubsection{Weight-shift}
For $v\in V(\Gamma)$ and $\delta\in\MZ$ define a graph
$\Shift(v, \delta)$ obtained from $\Gamma$ by changing weights of edges incident
to $v$ as follows:
\begin{itemize}
\item
the weight $a$ of $e = v \stackrel{x,a}{\to} u$ (where $v\ne u$) is increased by $\delta$;
\item
the weight $a$ of $e = u \stackrel{x,a}{\to} v$ (where $v\ne u$) is decreased by $\delta$.
\end{itemize}
It is easy to see that a weight-shift preserves the weights of circuits
in $\Gamma$ and, hence, preserves the property to be a pseudo-conjugacy graph.
The number of arithmetic operations required for this operation is clearly
bounded by the number of edges $E_v$ incident on $v$.

\subsubsection{Folding}\label{se:folding}
If $\Gamma$ is not folded, then it contains two edges
$e_1=u \stackrel{x,a}{\to} v_1$ and $e_2=u \stackrel{x,b}{\to} v_2$.
Consider several cases.
\begin{itemize}
\item
If $v_1=v_2$ and $a\equiv b \mod N$, then we remove one of the edges.
\item
If $v_1=v_2$ and $a\not\equiv b \mod N$, then we replace the modulus $N$ with
the number $\gcd(N,b-a)$ and remove one of the edges.
\item
If $v_1\ne v_2$, then:
\begin{itemize}
\item
apply $\Shift(v_1,a-b)$  (or $\Shift(v_2,b-a)$) to achieve $\gamma(e_1)=\gamma(e_2)$;
\item
identify $v_1$ and $v_2$;
\item
remove $e_2$.
\end{itemize}
\end{itemize}
It is straightforward to check that the described operation produces
a pseudo-conjugacy graph from a pseudo-conjugacy graph $\Gamma$
(see \cite{Ushakov:thesis} for more detail).
Since folding $e_1$ with $e_2$ decreases the number of edges,
a sequence of folds eventually stops with a folded graph.
The final result of folding is unique up to shifts of weights.
Denote it by $\Fold(\Gamma)$.

\subsubsection{$R$-completion}
Recall that a (finite) set $R\subset F(X)$ is called \emph{symmetrized}
if $R$ with every $r\in R$ contains all cyclic permutations of $r$.
\emph{To complete} a given weighted $X$-digraph $\Gamma$ with (symmetrized)
relators $R\subset F(X)$ means to
add a circuit at $v$ labeled with $r$ to $\Gamma$ of weight $0$
for every $v\in V$ and $r\in R$.
It is easy to check that if $\Gamma$ is a pseudo-conjugacy graph
then the result is a pseudo-conjugacy graph as well.
It clearly requires linear time (in $|\Gamma|$) to $R$-complete a given graph $\Gamma$.
In general, the result is not folded.

\subsection{Complexity of weighted $X$-digraph folding}

Let $\Gamma=(V,E,\mu,\gamma)$ be a weighted $X$-digraph.
It follows from the description of folding that $\Fold(\Gamma)$ as an
$X$-digraph (i.e., if we forget about the weight function) is the same
as the result of folding of the $X$-digraph $(V,E,\mu)$.
The only difference between weighted $X$-digraph folding and $X$-digraph folding
is weight-processing (applications of $\Shift(v_1,a-b)$ or $\Shift(v_2,b-a)$
in Section \ref{se:folding}).
The idea is to modify the procedure and take weights into account.

\subsubsection{$X$-digraph folding}
\label{se:digraph_folding}

Recall that $X$-digraph folding can be done in nearly linear time
$O(|V| \log^\ast(|V|))$, where $\log^\ast$ is inverse-Ackermann function
(see \cite{touikan06}).
In some sense, folding of an $X$-digraph $\Gamma$ describes the following equivalence
relation $\sim$ on $V(\Gamma)$:
\begin{itemize}
\item $v_1\sim v_2$ $\Leftrightarrow$ there exists a path from $v_1$ to $v_2$
in $\Gamma$ with $\mu(p)=\varepsilon$;
\end{itemize}
and results into the graph $\Gamma/\sim$. To effectively represent
the sets of identified vertices
(equivalence classes) one can use \emph{compressed tree representation}
(as in \cite{tarjan84}).
Each vertex $v\in V$ contains a pointer $p(v)$ to its parent,
the root points to itself. Vertex and its parent always belong to the same equivalence class, thus each tree represents an equivalence class.
This presentation allows to
compare and merge two classes very efficiently, which results in
$O(|V| \log^\ast(|V|))$ complexity bound.

\subsubsection{Weighted $X$-digraph folding}

To achieve a similar complexity bound for weighted $X$-digraph folding
we need to take into account shifts of weight. Since in the middle of the folding
process we work with equivalence classes
(as in Section \ref{se:digraph_folding} above), weight-shift
requires shifting weights for a whole class. To avoid shifting weights
many times with each vertex $v$ we keep a number $\delta(v)$ called
\emph{shift value}. That defines the total shift $\Delta(v)$ of a vertex $v$ as:
$$
\Delta(v) =
\begin{cases}
0, &\mbox{ if } p(v)=v;\\
\delta(v)+\Delta(v'), &\mbox{ if } p(v)=v'\ne v.
\end{cases}
$$
Hence, to perform weight-shift of $v_2$ while identifying $v_1$ and $v_2$ (case $v_1 \neq v_2$ in the Folding procedure, section \ref{se:folding})
we set $p(v_2)=v_1$ and instead of doing $Shift(v, c)$ immediately, we just set $\delta(v)=c$. Comparison and merge of two vertex
equivalence classes can be easily extended to tree
presentations with shifts $\delta$.
Therefore, the following proposition holds.

\begin{prop}\label{FoldComplexity}
The number of additions performed by the folding procedure
is $O(|V|\log^\ast(|V|))$.
\qed
\end{prop}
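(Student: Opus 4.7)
The plan is to reduce the weighted folding to the underlying $X$-digraph folding of Touikan, paying only a constant-factor overhead of additions per union-find operation. First I would observe that the sequence of vertex identifications performed by the weighted folding algorithm is exactly the sequence performed on the underlying (unweighted) $X$-digraph: the decision of which pair $(v_1,v_2)$ to identify depends only on the labeling function $\mu$ and the current equivalence classes, not on the weight function $\gamma$. Thus, by \cite{touikan06}, one may assume that the total number of union and find operations carried out is bounded by $O(|V|\log^\ast|V|)$.

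Next I would verify that each union-find operation contributes only $O(1)$ additions when maintaining the shift data $\delta(\cdot)$. For a union performed via $p(v_2)\gets v_1$, the required $\delta(v_2)$ is chosen so that the total shift $\Delta(v_2)=\delta(v_2)+\Delta(v_1)$ equals the shift amount $c$ prescribed by the folding step; since $\Delta(v_1)$ and $\Delta(v_2)$ are already available at the roots as a by-product of the preceding find operations, this is a single subtraction. For path compression during a find, when a vertex $v$ on the traversed path is re-pointed directly to the current root $v''$, we set $\delta(v)\gets \Delta(v)-\Delta(v'')$; these two quantities are accumulated incrementally as we walk up the tree, so each re-pointing costs $O(1)$ additions.

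Finally I would combine the two observations: the number of additions is bounded by a constant times the number of union and find steps (plus the initial $O(|E|)$ work to process the edges), giving a total of $O(|V|\log^\ast|V|)$ additions, as claimed. The bound on edges is absorbed since a folded $X$-digraph on $V$ has $O(|V|)$ edges, and at every stage the graph being folded has at most a constant factor more edges than vertices (edges are only removed, never created, during folding).

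The main obstacle is the bookkeeping for path compression: one must check that the invariant ``$\Delta(v)$ equals the accumulated shift from $v$ up to the current root'' is preserved after each re-pointing, and that the values $\Delta(v_1),\Delta(v_2)$ needed at the moment of a union are computed amortizedly within the find that precedes the union, rather than via a fresh root-to-leaf traversal. Once this invariant is established, the $O(|V|\log^\ast|V|)$ bound of \cite{touikan06} transfers verbatim to the weighted setting, which yields Proposition~\ref{FoldComplexity}.
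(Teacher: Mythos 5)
Your proposal is correct and follows essentially the same route as the paper: the paper justifies Proposition~\ref{FoldComplexity} by the preceding discussion, namely storing a lazy shift value $\delta(v)$ in the compressed-tree (union--find) representation so that each identification costs $O(1)$ additions and Touikan's $O(|V|\log^\ast|V|)$ bound for unweighted folding carries over. The only difference is one of detail: you make explicit the path-compression invariant $\delta(v)\gets\Delta(v)-\Delta(v'')$ and the fact that the identification sequence is weight-independent, both of which the paper leaves implicit in the phrase that comparison and merging ``can be easily extended to tree presentations with shifts $\delta$.''
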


We would like to point out that the values of the weight function can grow
exponentially fast (linearly in binary). Nevertheless, in all our
experiments we never encountered values greater than $2^{64}$.

\begin{cor}\label{co:Rcompletion_complexity}
Let $(X;R)$ be a symmetrized presentation.
The total number of additions required to apply $R$-completion
to a weighted graph $\Gamma$ is $O\left(\left|\Gamma\right| L\left(R\right)\log^\ast\left(\left|\Gamma\right| L\left(R\right)\right)\right)$.
\end{cor}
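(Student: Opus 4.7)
The plan is to reduce the corollary to Proposition \ref{FoldComplexity} by first bounding the size of the graph produced by $R$-completion, and then arguing that the additions performed occur entirely during the subsequent folding.

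First I would analyze the completion step itself. For each vertex $v\in V(\Gamma)$ and each $r\in R$, the procedure attaches a circuit at $v$ of label $r$ and weight $0$. Such a circuit requires $|r|$ new edges and $|r|-1$ new vertices (the basepoint is the already-present $v$). Summing over $v$ and $r$, the resulting graph $\Gamma'$ satisfies
\[
|V(\Gamma')| \le |V(\Gamma)| + |V(\Gamma)|\sum_{r\in R}(|r|-1) = O(|\Gamma|\,L(R)),
\]
and similarly $|E(\Gamma')|=O(|\Gamma|\,L(R))$. This step creates edges but assigns them all weight $0$, so no additions are performed during completion itself.

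Next I would observe that to turn $\Gamma'$ into a usable pseudo-conjugacy graph one must fold it, and it is precisely this folding that causes all additions (via the $\Shift$ operations described in Section \ref{se:folding}). Applying Proposition \ref{FoldComplexity} to $\Gamma'$ yields a bound of $O(|V(\Gamma')|\log^\ast|V(\Gamma')|)$ additions. Substituting the estimate $|V(\Gamma')|=O(|\Gamma|\,L(R))$ gives the stated bound
\[
O\bigl(|\Gamma|\,L(R)\,\log^\ast(|\Gamma|\,L(R))\bigr).
\]

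There is no real mathematical obstacle here; the argument is essentially bookkeeping. The only subtlety worth checking carefully is that the $|r|-1$ intermediate vertices of each circuit can honestly be taken to be freshly created (so that different circuits do not interact during completion), which is exactly what the definition of the completion operation allows. All genuine work is absorbed into the already-proved bound on folding.
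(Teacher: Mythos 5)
Your argument is correct and matches the paper's intended reasoning: the paper states that completion itself adds circuits in linear time with all new weights zero, so the completed graph has size $O(|\Gamma|\,L(R))$, and the stated bound is then exactly Proposition \ref{FoldComplexity} applied to that enlarged graph. Nothing is missing.
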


\subsection{Harvest}\label{se:harvest}

Here we describe a procedure
that for a weighted folded $X$-digraph $\Gamma$ and $L\in\MN$
finds all circuits in $\Gamma$ of weight $1$ and length up to $L$.
Since the number of such circuits is expected to grow exponentially with $L$
one can not expect a very efficient implementation.
Nevertheless, certain heuristics allow to speed up enumeration significantly.

For every vertex $v\in\Gamma$ we find all reduced paths $P$ in $\Gamma$ from $v$
of length up to $\lceil L/2\rceil$ and distribute them into bins $P_{u,x,\nu,l}$:
$$
P_{u,x,\gamma,l} =
\{p\in P\mid t(p)=u,\ \gamma(p)=\gamma,\ |p| = l,\ \mu(p) \mbox{ ends with }x\}.
$$
Then we consider pairs of ``compatible'' bins:
$$
T= \{(P_{u_1,x_1,\gamma_1,l_1},P_{u_2,x_2,\gamma_2,l_2}) \mid u_1=u_2, x_1\ne x_2, \gamma_1-\gamma_2 \equiv_N 1,\ 0\le l_1-l_2\le 1\}.
$$
The set of circuits at $v$ is constructed as:
$$
\{p_1p_2^{-1} \mid p_1\in P_1,\ p_2\in P_2,\ (P_1,P_2)\in T\}.
$$
Finally, the vertex $v$ is removed from $\Gamma$ and the same process is applied to another vertex in $\Gamma$. Note that we may skip vertices which has no
adjacent edges of a non-trivial weight. The number of operations is bounded
by $(2\cdot |X|)^{\lceil L/2\rceil}$, but it was much smaller in practice.

\subsection{ACM-move implementation efficiency}
\label{se:efficacy}

The implementation of the ACM-move described above constructs a subset of the set
of conjugates for a given $u$ in $\gpr{x,y}{v}$ of bounded length.
In general, it can be a proper subset of $U(u,v)$.
The result depends on the value of $D$ --
the number of completion steps used to construct pseudo-conjugacy graphs.
We denote it by $U_D(u,v)$. In this section we shortly prove that:
$$
\bigcup_{D=1}^\infty U_D(u,v) = U(u,v),
$$
and define the parameter $\delta$ (called \emph{depth}) of a conjugate $u'\in U(u,v)$
responsible for ``complexity'' of $u$.

For a set $S \subset \EP$ let $\partial S$ be its boundary
and $\overline{S}$ the closure of $S$ in $\EP$. Let $D$ be a finite connected
planar $X$-digraph with set of vertices $V(D)$ and set of edges $E(D)$.
Let $C(D)$ be a set of \emph{cells} of $D$ which are connected and simply
connected bounded components of $\MR^2\setminus D$. The unbounded component
of $\MR^2\setminus D$ is called the \emph{outer cell} of $D$ denoted by
$c_{\text{out}}$. An edge $e \in E(D)$ is \emph{free} if it does not belong to
$\partial c$ for any $c \in C(D)$. For any $e \in E(D)$
we denote its label by $\mu(e) \in X^{\pm}$. The boundary of a cell $c \in C(D)$
traversed in a counterclockwise direction starting from some vertex of $c$ makes a
closed path $e_1 \dots e_n$ giving the word
$\mu(c) = \mu(e_1) \dots \mu(e_n) \in (X^{\pm})^\ast$ called a \emph{boundary label}
of $c$. Depending on a starting vertex we get a cyclic permutation of the same word.

For the rest of this subsection let $D$ be a finite connected planar $X$-digraph with a \emph{base vertex} $v_0 \in V(D) \cap \partial c_{\text{out}}$.
The graph $D$ is a \emph{van Kampen diagram} over $\gpr{X}{R}$ if $\mu(c) \in R^{\star}$ for every $c \in C(D)$.
The boundary label $\mu(D)$ of $D$ is the boundary label of $\partial c_{\text{out}}$ read starting from $v_0$ in a counterclockwise direction.
Note that we need also to specify the first edge to read from $v_0$, that is, the starting boundary position, but it is not important for our considerations so we omit this issue.

Now let us exclude one of the cells from $C(D)$ and call it the
\emph{inner cell} $c_{\text{in}}$ of $D$. Denote $v_0$ by $v_{\text{out}}$ and
pick any vertex $v_{\text{in}} \in V(D) \cap \partial c_{\text{in}}$. We call $D$
an \emph{annular (Schupp) diagram} (see~\cite{Schupp:1968}) over $\gpr{X}{R}$
if $\mu(c) \in R^{\star}$ for any $c \in C(D)$. Its two boundary
labels $\mu_{\text{in}}(D) = \mu(c_{\text{in}})$ and
$\mu_{\text{out}}(D) = \mu(c_{\text{out}})$ read in a counterclockwise direction
from $v_{\text{in}}$ and $v_{\text{out}}$ correspondingly, are called the \emph{inner}
and \emph{outer} labels of $D$. For any $w_1, w_2 \in (X^\pm)^\ast$ we have that
$w_1 \sim_G w_2$ if and only if there exists an annular diagram $D$ over
$\gpr{X}{R}$ with $\mu_{\text{in}}(D) = w_1$ and $\mu_{\text{out}}(D) = w_2$.

We measure diagram complexity using a notion of depth
(introduced in~\cite{Ushakov:thesis}). For a van Kampen
or annular diagram $D$ define the {\em dual graph} $D^\ast = (V^\ast,E^\ast)$ as
an undirected graph with $V^\ast = C(D) \cup c_{\text{out}}$
(for annular diagrams we add $c_{\text{in}}$) and
$E^\ast = \{(c_1,c_2) \mid \partial c_1 \cap \partial c_2 \ne\emptyset \}$.
We denote the graph distance in $D^\ast$ by $d^\ast$.

The \emph{depth} of a (generalized) van Kampen diagram $D$ is defined by:
\[
    \delta(D) = \max_{c \in C(D)} d^\ast(c, c_{\text{out}}).
\]
The \emph{depth} of an annular diagram $D$ is:
\[
	\delta_\sim(D) = \max_{c \in C(D)} \left[ \min \left( d^\ast(c, c_{\text{out}}), d^\ast(c, c_{\text{in}}) \right) \right].	
\]
(There is a similar notion of a diagram radii
(see~\cite{Gersten-Riley:2002, Rileybook2007}).)
Define the \emph{conjugate depth} of two words $w_1, w_2 \in F(X)$ as:
\[
	\delta_{\sim}(w_1, w_2) =
		\min_{\substack{D \text{ is}\\
						\text{an annular}\\
						\text{diagram}}}
		\Set{\delta(D)}{\mu_{\text{in}}(D) = w_1, \mu_{\text{out}}(D) = w_2}
\]
if $w_1 \sim_{G} w_2$ and $\infty$ otherwise.
The next theorem shows a relation between complexity of the conjugacy search problem
and the conjugacy depth.

\begin{theorem*}[Theorem~3.5 in~\cite{Morar-Ushakov:2015}]
\label{Thm:cspSolverComplexity}
There exists an algorithm which for a given
finite symmetrized presentation $\gpr{X}{R}$
and words $w_1, w_2 \in F(X)$ terminates with the affirmative answer
if and only if $w_1 \sim_{G} w_2$.
Furthermore, its complexity can be bounded above by:
$$
	\tilde{O}\rb{ |w_1| |w_2| L(R)^{2 \delta_{\sim}(w_1, w_2)}}.
$$
\end{theorem*}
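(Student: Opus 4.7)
The plan is to run an iterative construction of pseudo-conjugacy graphs for $w_1$ in $G = \gpr{X}{R}$, searching at each stage for a weight-one circuit labeled $w_2$. Set $\Gamma_0 = \Loop(w_1)$, which is a pseudo-conjugacy graph for $w_1$, and inductively define $\Gamma_{d+1} = \Fold(R\text{-comp}(\Gamma_d))$, where $R\text{-comp}$ attaches a weight-zero $R$-circuit at every vertex. Both steps preserve (CG1) by the weight-shift bookkeeping of Section~\ref{se:folding}, so every $\Gamma_d$ remains a pseudo-conjugacy graph for $w_1$. After producing $\Gamma_d$, I would run the harvest of Section~\ref{se:harvest} to enumerate circuits of weight $1$ and length $|w_2|$, and test whether any is labeled $w_2$; if so, output ``$w_1 \sim_G w_2$'', otherwise increment $d$.

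Soundness is immediate from (CG1): a weight-one circuit labeled $w_2$ witnesses $w_2 \sim_G w_1^1 = w_1$. For completeness, assume $w_1 \sim_G w_2$ and let $D$ be a Schupp annular diagram realizing $\delta := \delta_\sim(w_1, w_2)$. The key geometric claim I would prove by induction on $d$ is that the union of the dual-distance-$\le d$ neighborhoods of the inner and outer boundaries of $D$ embeds into $\Gamma_d$ modulo the identifications forced by folding. The base case $d=0$ is visible from $\Loop(w_1)$ reading the inner boundary; in the inductive step every new cell of $D$ at dual-depth $d$ is attached along an edge already present in $\Gamma_{d-1}$, so it appears after one more $R$-completion, and the planar identifications are exactly those enforced by folding thanks to (CG1). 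Taking $d = \delta$, the outer boundary of $D$ is traced by a circuit of $\Gamma_\delta$ whose weight equals the algebraic winding of the inner boundary around $c_{\text{in}}$, namely~$1$.

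For the complexity bound I would use the fact that a depth-$\delta$ annular diagram contains at most $O((|w_1| + |w_2|) L(R)^\delta)$ cells, since every cell is within dual-distance $\delta$ of some boundary and each boundary layer can grow by a factor of at most $L(R)$. Hence $|\Gamma_\delta| = O((|w_1| + |w_2|) L(R)^\delta)$, and by Proposition~\ref{FoldComplexity} and Corollary~\ref{co:Rcompletion_complexity}, all completion-and-fold rounds up to depth $\delta$ cost $\tilde O(|\Gamma_\delta| L(R))$. The harvest at stage $\delta$ enumerates length-$|w_2|$ circuits by matching half-paths of length $\lceil |w_2|/2 \rceil$; the number of such half-paths emanating from a given vertex is bounded by $|\Gamma_\delta|$, yielding a harvest cost of $\tilde O(|w_2| |\Gamma_\delta|^2)$ in the worst case, which after substitution produces the advertised bound $\tilde O(|w_1| |w_2| L(R)^{2\delta_\sim(w_1, w_2)})$.

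The hard part is the inductive geometric claim that $\Gamma_d$ faithfully records every depth-$d$ annular diagram. The subtle point is that folding merges vertices and shifts weights, and one must check that such merges never destroy the weight-$1$ circuit we want to harvest. Invariant (CG1) and the uniqueness of $\Fold$ up to weight-shifts (Section~\ref{se:folding}) are the right tools, but translating them into a cell-by-cell correspondence with the dual layers of $D$ requires care, especially where the inner-layer and outer-layer neighborhoods of $D$ begin to overlap near dual-depth $\delta$ and where distinct occurrences of the same subword in $w_1$ or $w_2$ force non-trivial identifications under folding.
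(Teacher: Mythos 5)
The paper does not actually prove this statement --- it is imported verbatim from \cite{Morar-Ushakov:2015} --- so your proposal can only be measured against the surrounding machinery of Section~\ref{se:newmove}. Your architecture (iterate $R$-completion and folding starting from $\Loop(w_1)$, harvest weight-one circuits, soundness from (CG1)) is exactly that machinery, and it does give a correct semi-decision procedure for conjugacy. The problem is that it does not prove the stated complexity bound, because of a mismatch between the depth parameter your one-sided construction controls and the two-sided depth $\delta_\sim(w_1,w_2)$ in the theorem.

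Concretely, the gap is your inductive claim that after $d$ rounds ``the union of the dual-distance-$\le d$ neighborhoods of the inner \emph{and outer} boundaries of $D$ embeds into $\Gamma_d$.'' Nothing in the construction ever seeds the outer boundary: $\Gamma_0=\Loop(w_1)$ reads only $\mu_{\text{in}}(D)$, and each completion round extends the captured portion of $D$ by one dual layer measured from $c_{\text{in}}$. Your base case already fails for the outer neighborhood, and the induction only shows that cells with $d^\ast(c,c_{\text{in}})\le d$ are present. Hence the number of rounds required is the \emph{inner} conjugacy depth $\delta_\sim^{w_1}(w_2)=\min_D \max_{c} d^\ast(c,c_{\text{in}})$ --- exactly the parameter in the paper's own Theorem~\ref{th:effectiveness} --- not $\delta_\sim(w_1,w_2)$. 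Since every cell lies within $\delta_\sim(D)$ of one of the two boundaries one only gets $\delta_\sim^{w_1}(w_2)\le 2\delta_\sim(w_1,w_2)+O(1)$, which produces a graph of size $O\rb{|w_1|\,L(R)^{2\delta_\sim}}$ but leaves the factor $|w_1||w_2|$ unexplained; moreover your harvest estimate is too optimistic (the number of reduced half-paths of length $\lceil |w_2|/2\rceil$ from a vertex is not bounded by $|\Gamma_\delta|$ --- the paper itself only claims the exponential bound $(2|X|)^{\lceil L/2\rceil}$). The natural way to get both the exponent $2\delta_\sim$ and the factor $|w_1||w_2|$, and what the citation suggests, is a genuinely two-sided construction: grow pseudo-conjugacy graphs from $w_1$ and from $w_2$ to depth $\delta_\sim$, of sizes $O\rb{|w_1|L(R)^{\delta_\sim}}$ and $O\rb{|w_2|L(R)^{\delta_\sim}}$, and detect conjugacy via a product or intersection of the two. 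Either rework the argument along those lines, or weaken your claim to a bound with $\delta_\sim^{w_1}(w_2)$ in the exponent.
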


For our purposes it will be useful to define another characteristic
of annular diagrams, the \emph{inner conjugacy depth}:
$$
\delta_\sim^{in}(D) = \max_{c \in C(D)} d^\ast(c, c_{\text{in}}),
$$
and the conjugacy depth from $w_1$ to $w_2$ as:
\[
	\delta_{\sim}^{w_1}(w_2) =
		\min_{\substack{D \text{ is}\\
						\text{an annular}\\
						\text{diagram}}}
		\Set{\delta_\sim^{w_1}(D)}{\mu_{\text{in}}(D) = w_1, \mu_{\text{out}}(D) = w_2},
\]
if $w_1\sim_G w_2$ and $\infty$ otherwise.

\begin{thm}\label{th:effectiveness}
Assume that $u$ and $u'$ are conjugate in $\gpr{x,y}{v}$
and $\delta=\delta_\sim^{u}(u')$.
If $\delta\le D$ (where $D$ is the number of $R$-completions
applied to $\Loop(u)$),
then our implementation of the ACM-move
applied for $(u,v)$ produces the pair $(u',v)$.
\end{thm}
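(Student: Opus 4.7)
The plan is to show that any annular diagram witnessing the conjugacy of $u$ and $u'$ with inner depth at most $D$ can be traced inside the graph $\Gamma_D$ produced by $D$ rounds of $R$-completion (with intermediate foldings) applied to $\Loop(u)$. Once this is established, the outer boundary of the diagram becomes a circuit in $\Gamma_D$ of weight $1$ and label $u'$, and is therefore enumerated by the harvest procedure of Section~\ref{se:harvest}.

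Fix an annular diagram $\Delta$ over $\gpr{x,y}{v}$ realizing $\delta_\sim^u(\Delta)=\delta$, with $\mu_{\text{in}}(\Delta)=u$ and $\mu_{\text{out}}(\Delta)=u'$. Endow its $1$-skeleton with an integer weight function so that each cell boundary has total weight $0$ and the inner boundary has total weight $1$. Such an assignment always exists on a planar annulus: for instance, choose a simple dual arc from $c_{\text{out}}$ to $c_{\text{in}}$ and put weight $\pm 1$ on each edge it crosses, signed by orientation. As a consequence, the outer boundary also becomes a circuit of weight $1$. This turns $\Delta$ into a weighted $X$-digraph $\tilde\Delta$. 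Write $\Gamma_0=\Loop(u)$, and let $\Gamma_k$ be obtained from $\Gamma_{k-1}$ by one $R$-completion step followed by folding, so the implementation produces $\Gamma_D$. For $0\le k\le\delta$, denote by $\tilde\Delta_k\subseteq\tilde\Delta$ the subgraph consisting of the inner boundary together with all cells at dual-graph distance at most $k$ from $c_{\text{in}}$.

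I would then prove by induction on $k$ that there is a label- and weight-preserving morphism of weighted $X$-digraphs $\phi_k\colon\tilde\Delta_k\to\Gamma_k$ sending the inner boundary of $\Delta$ onto $\Loop(u)\subseteq\Gamma_k$. The base case $k=0$ is clear since $\tilde\Delta_0=\partial c_{\text{in}}$ is already a copy of $\Loop(u)$. For the inductive step, every cell $c$ at distance $k+1$ from $c_{\text{in}}$ meets $\tilde\Delta_k$ at a vertex $v$; the $R$-completion of $\Gamma_k$ attaches at the vertex $\phi_k(v)$ a weight-$0$ circuit labeled by the cyclic word $\mu(c)\in R^\star$, and reading $\partial c$ starting from $v$ extends $\phi_k$ over $c$. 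The subsequent folding is a composition of vertex identifications and $\Shift$ operations, which are morphisms of weighted $X$-digraphs, so the extension descends to $\Gamma_{k+1}$. After $\delta$ iterations $\phi_\delta$ is defined on all of $\tilde\Delta$ and carries the outer boundary to a circuit in $\Gamma_\delta$ of weight $1$ labeled $u'$; the remaining $D-\delta$ iterations only add edges or merge vertices, so this circuit persists (with the same label and weight) in $\Gamma_D$, at which point harvest finds it.

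The main technical obstacle is the book-keeping of weights across many foldings. The $\Fold$ operation is designed to preserve the weight of every circuit via the $\Shift$ mechanism, so the claim is morally immediate, but spelling it out requires checking that, at each stage, the images of cell boundaries remain weight-$0$ circuits and the image of the inner boundary remains a weight-$1$ circuit in $\Gamma_k$ even after the underlying vertices are identified with other vertices of $\Gamma_k$. The cleanest way to handle this is to work in the category of weighted $X$-digraphs and observe that both $R$-completion (a free addition of weight-$0$ relator loops) and $\Fold$ (the quotient by the ``same path'' equivalence of Section~\ref{se:digraph_folding}) are functorial with respect to label- and weight-preserving morphisms. Under this viewpoint the inductive step reduces to the purely combinatorial fact that the cyclic permutations of $\mu(c)$ all lie in $R^\star$ and are therefore among the loops attached by $R$-completion.
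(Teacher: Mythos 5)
Your argument is correct and is essentially the argument the paper delegates wholesale to its citation of \cite[Theorem~17.6.12]{ushakov2011non}: induction on the dual-graph distance from $c_{\text{in}}$, mapping each successive annular layer of the diagram into the graph obtained after the corresponding round of $R$-completion and folding, then harvesting the image of the outer boundary as a weight-$1$ circuit. The one phrase to repair is ``label- and weight-preserving morphism'': the maps $\phi_k$ cannot preserve edge weights literally (the relator loops attached by $R$-completion carry weight $0$ on every edge, whereas $\partial c$ may carry $\pm1$ on the edges crossed by your dual arc), so the invariant must be stated at the level of circuit weights modulo $\Shift$ --- a point you already flag and correctly resolve in your final paragraph.
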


\begin{proof}
Same proof as that of \cite[Theorem~17.6.12]{ushakov2011non}.
\end{proof}

It easily follows from Corollary \ref{co:Rcompletion_complexity} that, in general,
$D$ iterations of $R$-completion procedure require exponential time.
Fortunately, in our experiments with $\AK(3)$, we observed that the value $D=2$ is sufficient.
Application of more than two $R$-completions did not produce any additional conjugates
and did not change highlighted figures in Table \ref{tab:presentation_num}, section \ref{se:results:AK3}.

\section{Nielsen automorphisms and AC-equivalence}
\label{se:automorphisms}

In this section we discuss automorphism-moves,
namely applications of an automorphism $\varphi\in\Aut(F_2)$:
$$
(u,v) \to (\varphi(u),\varphi(v)).
$$
It is not known if adding these transformations to AC-moves results
in an equivalent system of transformations or not (even for presentations of $\One$).
Nevertheless, the following is true.

\begin{lem}[{{\cite[Proposition 1(iii)]{Burns-Macedonska:1992} or \cite[Lemma 1]{Miasnikov99geneticalgorithms}}}]
\label{le:old_phi}
If $(u,v)$ can be transformed into $(x,y)$ using AC-moves and
automorphisms, then $(u,v)$ can be transformed into $(x,y)$ using AC-moves only.
\qed
\end{lem}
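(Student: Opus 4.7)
The plan is to push every automorphism-move to the end of the transformation sequence, reducing the lemma to the assertion that any free basis of $F_2$ is AC-equivalent to the canonical pair $(x,y)$. The key commutation property is: for any AC-move $\alpha$ and any $\varphi\in\Aut(F_2)$, performing $\varphi$ followed by $\alpha$ has the same effect on a pair as performing some AC-move $\alpha'$ of the same type followed by $\varphi$. For $AC_1$ ($u\to uv$) and $AC_2$ ($u\to u^{-1}$) this is immediate from the fact that $\varphi$ is a homomorphism, and one can take $\alpha'=\alpha$. For $AC_3$ ($u\to w^{-1}uw$) the identity $\varphi(\varphi^{-1}(w)^{-1}u\varphi^{-1}(w))=w^{-1}\varphi(u)w$ shows that $\alpha'$ is the $AC_3$-move with conjugator $\varphi^{-1}(w)$. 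Applying this transposition repeatedly to a mixed sequence of moves transforming $(u,v)$ into $(x,y)$, every automorphism-move can be migrated past all subsequent AC-moves, yielding a factorization
$$
(u,v)\xrightarrow{\text{AC}^\ast}(u',v')\xrightarrow{\varphi}(x,y),
$$
where $\varphi\in\Aut(F_2)$ is the composition of all the original automorphism-moves. In particular $(u',v')=(\varphi^{-1}(x),\varphi^{-1}(y))$ is a free basis of $F_2$.

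It remains to show that every free basis of $F_2$ is AC-equivalent to $(x,y)$. By Nielsen's theorem, $\Aut(F_2)$ is generated by three elementary automorphisms: inversion $x\mapsto x^{-1}$, multiplication $x\mapsto xy$, and the swap $x\leftrightarrow y$. Applied to a pair, the first two coincide with a single $AC_2$-move and a single $AC_1$-move respectively, while the swap can be realized by a short sequence of AC-moves culminating in an $AC_3$-conjugation; for instance, on a pair $(a,b)$ the chain
$$
(a,b)\to(ab^{-1},b)\to(ab^{-1},bab^{-1})\to(b^{-1},bab^{-1})\to(b^{-1},a)\to(b,a)
$$
does the job, with the penultimate step being conjugation of the second component by $b$. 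Expressing $\varphi^{-1}$ as a word in the three elementary Nielsen automorphisms and simulating each letter by the corresponding AC-moves produces an AC-sequence from $(u',v')$ to $(x,y)$; concatenating it with the AC-sequence from $(u,v)$ to $(u',v')$ yields the desired pure AC-transformation from $(u,v)$ to $(x,y)$.

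The main subtle point is the swap. The other two elementary Nielsen transformations coincide with AC-moves directly, but the swap is not itself an AC-move and its realization depends essentially on $AC_3$: pure Nielsen moves without conjugation can produce the swap only up to an inversion and overall conjugation of a component, and the role of $AC_3$ is to remove that leftover conjugation. Once this step and the commutation identity above are verified, the rest of the proof is mechanical.
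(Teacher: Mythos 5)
Your proof is correct, and both of its ingredients are verifiable: the move-by-move commutation identities (for $AC_3$ the conjugator transforms by $\varphi^{-1}$, as you note), and the five-step AC-realization of the swap, which checks out ($ab^{-1}\cdot(bab^{-1})^{-1}=b^{-1}$ and $b^{-1}(bab^{-1})b=a$). The route is organized differently from the paper's, though the underlying content is the same. The paper does not normalize the sequence of moves; instead it states the lemma with a citation to Burns--Macedonska and Miasnikov and observes that it follows at once from Lemma~\ref{le:new_phi}: taking the base pair $(x,y)$ and using the fact that $(x,y)\sim_{AC}(\varphi(x),\varphi(y))$ for every $\varphi\in\Aut(F_2)$, Lemma~\ref{le:new_phi} shows that the entire AC-equivalence class of $(x,y)$ is closed under automorphism-moves, so a mixed sequence of AC-moves and automorphisms can never leave that class. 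Your ``push the automorphisms to the right'' argument replaces this closure statement by an explicit normalization $(u,v)\rightarrow(u',v')\rightarrow(x,y)$ with all automorphisms accumulated at the end; the commutation identities you verify are exactly the move-level version of the equivariance packaged in Lemmas~\ref{le:same_phi} and~\ref{le:new_phi}. What your version buys is self-containedness: it does not presuppose the machinery of $\End_{AC}$, and it spells out the Nielsen step (in particular the swap, which is not itself an AC-move) that the paper leaves implicit in the phrase ``$(x,y)\sim_{AC}(\varphi(x),\varphi(y))$.'' What the paper's version buys is brevity and reusability: the closure statement of Lemma~\ref{le:new_phi} applies to arbitrary base pairs such as $\AK(n)$, not just to $(x,y)$, which is what the authors actually need later.
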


With any $(u,v)$ we can associate $\varphi_{(u,v)}\in\End(F_2)$ defined by
$\varphi_{(u,v)}(x)=u$ and $\varphi_{(u,v)}(y)=v$. That way we can
treat $\CB_2$ as a monoid.
By Lemma \ref{le:same_phi}, $\CB_2$ naturally acts on $\AC$-components.

\begin{lem}
\label{le:same_phi}
Assume that $(u,v) \sim_{\AC} (u',v')$ and $\varphi\in\CB_2$.
Then:
$$(\varphi(u),\varphi(v))\sim_{\AC}(\varphi(u'),\varphi(v')).$$
\end{lem}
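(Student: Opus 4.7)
The plan is to reduce the claim to the case of a single AC-move and then check directly that the image under $\varphi$ of each of the three move types is again an AC-move of the same type. Since $\sim_{\AC}$ is by definition generated by finite sequences of AC-moves, induction on the length of a sequence witnessing $(u,v)\sim_{\AC}(u',v')$ reduces the lemma to the one-step case. Throughout I would treat $\varphi\in\CB_2$ as the endomorphism $\varphi_{(a,b)}\in\End(F_2)$ sending $x\mapsto a$ and $y\mapsto b$, and I would use only that $\varphi$ is a group homomorphism of $F_2$.

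For a single move there are three cases to check. An $AC_1$-move $(u,v)\to (uv,v)$ has image $(\varphi(u)\varphi(v),\varphi(v))$ because $\varphi(uv)=\varphi(u)\varphi(v)$, so it is again an $AC_1$-move; the remaining index and ordering choices for $AC_1$ are handled identically. An $AC_2$-move $(u,v)\to (u^{-1},v)$ has image $(\varphi(u)^{-1},\varphi(v))$, which is an $AC_2$-move. An $AC_3$-move $(u,v)\to (w^{-1}uw,v)$ with $w\in F_2$ has image $(\varphi(w)^{-1}\varphi(u)\varphi(w),\varphi(v))$, which is an $AC_3$-move with conjugating element $\varphi(w)\in F_2$. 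Free reduction of the resulting words happens automatically inside $F_2$, so no additional bookkeeping is needed.

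There is essentially no obstacle in this argument; the lemma is a clean functoriality statement expressing that any endomorphism of $F_2$ commutes with the algebraic operations defining AC-moves. The hypothesis $\varphi\in\CB_2$ is in fact stronger than necessary for preservation of AC-equivalence alone; it is imposed so that the image pair $(\varphi(u),\varphi(v))$ lies back in $\CB_2$, which is what makes the asserted action of the monoid $\CB_2$ on $\AC$-components well-defined.
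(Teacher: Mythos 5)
Your proof is correct, and it takes a mildly different route from the paper's. The paper reduces to a single \emph{ACM}-move rather than a single AC-move: it assumes $u \sim u'$ in $\gpr{x,y}{v}$ with $v=v'$, and then observes that applying $\varphi$ preserves this conjugacy, i.e.\ $\varphi(u)\sim\varphi(u')$ in $\gpr{x,y}{\varphi(v)}$ (which holds because $\varphi(\ncl(v))\subseteq\ncl(\varphi(v))$ for any endomorphism $\varphi$). This is a one-line argument, but it leans on the paper's earlier claim that AC-equivalence can be certified by ACM-moves. Your version instead inducts on a sequence of the original AC-moves and verifies case by case that $\varphi$ carries each of $AC_1$, $AC_2$, $AC_3$ to a move of the same type; this is more elementary and self-contained, at the cost of three (easy) cases instead of one. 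Both arguments use only that $\varphi$ is a homomorphism of $F_2$, and your closing remark is accurate: the hypothesis $\varphi\in\CB_2$ is not needed for the displayed equivalence itself, only to ensure the image pairs land back in $\CB_2$ so that the monoid action on $\AC$-components is well defined.
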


\begin{proof}
Clearly, it is sufficient to prove the result for the case when
$(u',v')$ is obtained from $(u,v)$ by a single ACM-move, i.e.,
we may assume that $u \sim u'$ in $\gpr{x,y}{v}$ and $v=v'$.
Hence, $\varphi(u) \sim \varphi(u')$ in $\gpr{x,y}{\varphi(v)}$
and $(\varphi(u),\varphi(v))\sim_{\AC}(\varphi(u'),\varphi(v'))$.
\end{proof}

Lemma \ref{le:new_phi} immediately implies Lemma \ref{le:old_phi}
since $(x,y)\sim_{AC} (\varphi(x),\varphi(y))$ for every $\varphi\in\Aut(F(X))$.

\begin{lem}
\label{le:new_phi}
Let $u,v\in F(X)$, $\varphi\in\End(F(X))$, and $(u,v) \sim_{\AC} (\varphi(u),\varphi(v))$.
Then for any $u',v'\in F(X)$ the following holds:
$$
(u,v) \sim_{\AC} (u',v')
\ \ \Rightarrow\ \
(u',v') \sim_{\AC} (\varphi(u'),\varphi(v'))
$$
\end{lem}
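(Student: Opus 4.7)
The plan is to observe that this statement is essentially a formal chaining of Lemma \ref{le:same_phi} with the fact that $\sim_{\AC}$ is an equivalence relation, and nothing else is needed. I would start by noting that since AC-moves are invertible, $\sim_{\AC}$ is symmetric and transitive. The only technical point to verify up front is that Lemma \ref{le:same_phi}, although stated for $\varphi\in\CB_2$, holds for arbitrary $\varphi\in\End(F(X))$: its proof only uses that $\varphi$ is a homomorphism of free groups, which takes conjugation in $\gpr{x,y}{v}$ to conjugation in $\gpr{x,y}{\varphi(v)}$, and nothing about $\varphi$ being balanced or surjective.

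Given this, the argument is immediate. Assume $(u,v)\sim_{\AC}(u',v')$. Apply Lemma \ref{le:same_phi} with the endomorphism $\varphi$ to obtain
\[
(\varphi(u),\varphi(v))\sim_{\AC}(\varphi(u'),\varphi(v')).
\]
By hypothesis we also have $(u,v)\sim_{\AC}(\varphi(u),\varphi(v))$. Combining these with $(u,v)\sim_{\AC}(u',v')$ using symmetry and transitivity of $\sim_{\AC}$, we obtain the chain
\[
(u',v')\sim_{\AC}(u,v)\sim_{\AC}(\varphi(u),\varphi(v))\sim_{\AC}(\varphi(u'),\varphi(v')),
\]
which is the desired conclusion.

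There is really no main obstacle here; the content of the lemma is that the $\varphi$-shift behaves well along entire AC-components, which is precisely what Lemma \ref{le:same_phi} gives. The only thing worth flagging for the reader is the remark after the statement: applied with $(u,v)=(x,y)$ and $\varphi\in\Aut(F_2)$, we have $(x,y)\sim_{\AC}(\varphi(x),\varphi(y))$ automatically (as automorphisms of $F_2$ are products of Nielsen transformations, hence realizable by AC-moves on the canonical tuple), so Lemma \ref{le:new_phi} specializes to Lemma \ref{le:old_phi}. This sanity check is worth including to justify the claim made just before the statement.
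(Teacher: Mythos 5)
Your proof is correct and follows essentially the same route as the paper's: the paper also reduces to a single ACM-move and then writes down exactly your chain $(\varphi(u'),\varphi(v'))\sim_{\AC}(\varphi(u),\varphi(v))\sim_{\AC}(u,v)\sim_{\AC}(u',v')$. Your explicit observation that Lemma~\ref{le:same_phi} really only needs $\varphi$ to be an endomorphism (the paper handles this by inlining the same one-move argument rather than citing the lemma) is a worthwhile clarification but not a different approach.
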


\begin{proof}
As above, we may assume that $u \sim u'$ in $\gpr{x,y}{v}$ and $v=v'$.
Hence $(\varphi(u'),\varphi(v')) \sim_{AC} (\varphi(u),\varphi(v))\sim_{AC}(u,v)\sim_{AC}(u',v')$.
\end{proof}

With $(u,v) \in \CB_2$ we can associate a monoid:
$$
\End_{\AC}(u,v) = \Set{\varphi\in\End(F_2)}{(u,v)\sim_{\AC} (\varphi(u),\varphi(v))},
$$
under the usual composition. Lemma \ref{le:new_phi} implies that:
\begin{itemize}
\item
$\End_{\AC}(u,v) = \End_{\AC}(u',v')$ whenever $(u,v) \sim_{AC} (u',v')$;
\item
$\Aut(F_2) \le \End_{\AC}(x,y)$.
\item
$\End_{\AC}(x,y) = \Set{\varphi}{\varphi(x)=u,\ \varphi(y)=v,\ (u,v)\in\CB_2}$
if and only if ACC holds.
\end{itemize}
Below we prove that $\Aut(F_2) \le \End_{\AC}(\AK(n))$ for every $n\in\MN$.

\subsection{Akbulut-Kurby examples}

Lemmas \ref{le:AKn_phi_y_x}, \ref{le:AKn_phi_x_Y}, and \ref{le:AKn_phi_x_yx}
show that:
$$\varphi(\AK(n)) \sim_{AC} \AK(n),$$
for some $\varphi\in\Aut(F_2)$.
Proofs were obtained using a computer program.

Below for brevity we use $X$ and $Y$ as $x^{-1}$ and $y^{-1}$.

\begin{lem}
\label{le:AKn_phi_y_x}
$\varphi(\AK(n)) \sim_{AC} \AK(n)$ for $\varphi\in\Aut(F_2)$
defined by $\varphi(x) = y$, $\varphi(y) = x$.
\end{lem}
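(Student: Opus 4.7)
The plan is to reduce the statement to two elementary moves by treating the two relators separately. Write $u_1 = xyxy^{-1}x^{-1}y^{-1}$ and $u_2 = x^{n+1}y^{-n}$, so $\AK(n)$ corresponds to the pair $(u_1,u_2)$ and $\varphi(\AK(n)) = (\varphi(u_1), \varphi(u_2)) = (yxyx^{-1}y^{-1}x^{-1},\ y^{n+1}x^{-n})$. The first component is easy: by inspection $\varphi(u_1)$ is literally the free-group inverse of $u_1$, so a single $AC_2$-move applied to the first coordinate realizes $u_1 \leftrightarrow \varphi(u_1)$.

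The substantive step is the second component, and the key idea is to recognize $\varphi$ as an \emph{inner} automorphism modulo $u_1$. Setting $\Delta = xyx$ (which also equals $yxy$ in $B_3 = \gpr{x,y}{u_1}$, the $3$-strand braid group), one computes
\[
x\Delta = xyxy = (xyx)y = (yxy)y = \Delta y \quad \text{and} \quad y\Delta = yxyx = (yxy)x = (xyx)x = \Delta x \qquad \text{in } B_3,
\]
so $\Delta y \Delta^{-1} = x$ and $\Delta x \Delta^{-1} = y$. Consequently
\[
\Delta u_2 \Delta^{-1} = (\Delta x \Delta^{-1})^{n+1}(\Delta y \Delta^{-1})^{-n} = y^{n+1} x^{-n} = \varphi(u_2) \quad \text{in } \gpr{x,y}{u_1},
\]
i.e. $u_2$ and $\varphi(u_2)$ are conjugate in the one-relator group defined by $u_1$ alone. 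An ACM-move therefore replaces $u_2$ by $\varphi(u_2)$ while keeping the first coordinate fixed.

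Chaining the two moves yields $(u_1, u_2) \sim_{AC} (u_1, \varphi(u_2)) \sim_{AC} (\varphi(u_1), \varphi(u_2))$, which is the claim. The only calculation of substance is the braid-group identity $\Delta x \Delta^{-1} = y$, which is a two-line manipulation of the defining relation, so no real obstacle arises; this matches the authors' comment that the proofs were produced by a computer search. If one insists on an explicit sequence of $AC_1, AC_2, AC_3$-moves, it can be extracted by unpacking the ACM-move into a conjugation $u_2 \to \Delta u_2 \Delta^{-1}$ in $F_2$ (an $AC_3$-move) followed by $AC_1$-moves that use copies of $u_1$ to reduce $\Delta u_2 \Delta^{-1}$ to $\varphi(u_2)$; the conceptual argument above simply makes the mechanism transparent.
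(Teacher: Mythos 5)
Your proof is correct and is essentially the paper's own argument: the authors likewise dispose of the first relator with a single $AC_2$-move (since $\varphi(xyxYXY)$ is the free inverse of $xyxYXY$) and handle the second relator by an ACM-move realizing conjugation by $c=xyx$ in the braid group $\gpr{x,y}{xyx=yxy}$, exactly your $\Delta$. The only difference is cosmetic (order of the two moves and direction of the conjugation), plus your welcome conceptual gloss identifying $\varphi$ as the inner automorphism induced by the half-twist $\Delta$, which the paper leaves implicit in its appendix computation.
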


\begin{proof}
The pair
$(\varphi(xyxYXY),\varphi(x^k Y^{k+1})) = (yxyXYX,y^k X^{k+1})$
can be modified as follows:
\[
\begin{array}{rl}
\sim_{AC_2}& (xyxYXY,y^k X^{k+1})\\
\sim_{ACM}& (xyxYXY,c^{-1} x^k Y^{k+1} c) \mbox{ where } c=xyx\\
\sim_{AC_3}& (xyxYXY,x^{k}Y^{k+1} ).
\end{array}
\]
Appendix \ref{se:used_moves} provides more detail for each ACM-move used.
\end{proof}

\begin{lem}
\label{le:AKn_phi_x_Y}
$\varphi(\AK(n)) \sim_{AC} \AK(n)$ for $\varphi\in\Aut(F_2)$
defined by $\varphi(x) = x$, $\varphi(y) = Y$.
\end{lem}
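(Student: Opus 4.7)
The plan is to mirror Lemma~\ref{le:AKn_phi_y_x}: compute $\varphi(\AK(n))$ explicitly, then exhibit a sequence of AC- and ACM-moves returning it to $\AK(n)$. Applying $\varphi$ letter-by-letter yields
\[
\varphi(xyxYXY) = xYxyXy, \qquad \varphi(x^{n+1}Y^n) = x^{n+1}y^n,
\]
so the task reduces to showing $(xYxyXy,\, x^{n+1}y^n) \sim_{AC} (xyxYXY,\, x^{n+1}Y^n)$. Unlike the situation in Lemma~\ref{le:AKn_phi_y_x}, neither transformed relator is a cyclic conjugate of its target or of its target's inverse, so no single round of $AC_2$/$AC_3$ reduces the problem to one ACM-move on one relator.

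The next natural step is to look for a short conjugator $c$ that realizes the required change via a single ACM-move. In Lemma~\ref{le:AKn_phi_y_x} the conjugator $c = xyx$ worked because it is the half-twist $\Delta$ of the braid group $\gpr{x,y}{xyxYXY}$, which conjugates $x$ to $y$ and back. Here one would need a conjugator that inverts $y$ modulo one of the two relators, but this is ruled out by abelianization: $\gpr{x,y}{xyxYXY}$ abelianizes to $\MZ$ with $x, y \mapsto 1$, putting $y$ and $Y$ in different classes; likewise the torus knot group $\gpr{x,y}{x^{n+1}y^n}$ abelianizes to $\MZ$ with $y$ and $Y$ mapping to distinct nonzero elements. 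Hence no single ACM-move suffices and the proof must genuinely combine both relators.

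I would therefore run the ACM-search of Section~\ref{se:newmove} on $(xYxyXy,\, x^{n+1}y^n)$, expecting a sequence of roughly the following shape: first an $AC_1$-move $r_i \to r_i r_j^{\pm 1}$ that mixes the relators and escapes the abelianization obstruction; then one or more ACM-moves exploiting the half-twist $\Delta' = xYx$ of $\gpr{x,y}{xYxyXy}$ (which conjugates $x$ to $Y$ and $y$ to $X$) to begin realigning the two relators; then symmetric ACM-moves using the modified second relator to finish transforming the first; and finally $AC_2$/$AC_3$ clean-up to restore the canonical forms $xyxYXY$ and $x^{n+1}Y^n$. Each ACM-move is then certified by an explicit conjugator, with the details deferred to the appendix in parallel with Lemma~\ref{le:AKn_phi_y_x}. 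The main obstacle is locating this sequence: the abelianization obstruction forces the search depth to be large enough to involve substantive interaction of the two relators, which is precisely the regime the weighted $X$-digraph machinery of Section~\ref{se:newmove} was designed to handle.
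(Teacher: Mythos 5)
Your setup is correct: $\varphi(xyxYXY)=xYxyXy$ and $\varphi(x^{n+1}Y^{n})=x^{n+1}y^{n}$, and your abelianization argument showing that no single ACM-move can repair either relator is sound (in the abelianization of $\gpr{x,y}{xyxYXY}$ the images of $x^{n+1}y^{n}$ and of $(x^{n+1}Y^{n})^{\pm 1}$ are $2n+1$ and $\pm 1$, which differ for $n\ge 1$). But the proposal stops exactly where the proof has to begin. The lemma asserts a concrete $\AC$-equivalence, and its only available proof is an explicit certificate: a finite chain of moves together with, for each ACM-move, an explicit conjugator $c$ and a verification that $c^{-1}uc(u')^{-1}=1$ in the relevant one-relator group. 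Saying you would ``run the ACM-search of Section~\ref{se:newmove} \dots expecting a sequence of roughly the following shape'' describes the method by which the paper found its proof, but it does not prove anything; until the sequence is exhibited and each ACM-move certified, the statement is unverified, and you acknowledge yourself that locating the sequence is the main obstacle. The paper's proof is a sixteen-step chain alternating $AC_2$, $AC_3$ and ACM-moves, passing through intermediate pairs such as $(xyXyxY,\,xyXy^{k+1}xY^{k+2})$ and $(xyXyxYYXy,\,xyXy^{k}XY^{k})$, with conjugators $c=y^{k+1}$, $c=y^{k+2}XyxY$, $c=xyXy$, $c=Y^{k-1}xY^{k-1}xYYXy$, $c=Y^{k}$, each justified by an identity computed in Appendix~\ref{se:used_moves}. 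None of that content is recoverable from your outline.

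A secondary inaccuracy: you predict the chain must open with an $AC_1$-move to ``escape the abelianization obstruction.'' The paper's chain contains no $AC_1$-moves at all; the interaction between the two relators is carried entirely by the ACM-moves, which by definition replace $u_i$ by a conjugate of $u_i^{\pm 1}$ modulo the \emph{other} relator and hence already subsume the relator-mixing that $AC_1$ would provide. This does not invalidate your strategy, but it shows that the anticipated ``shape'' of the certificate is not reliable, which is precisely why the explicit computation cannot be deferred.
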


\begin{proof}
The pair $(\varphi(xyxYXY),\varphi(x^k Y^{k+1})) = (xYxyXy,x^k y^{k+1})$
can be modified as follows:
$$
\begin{array}{rl}
\sim_{AC_2}& (xYxyXy,Y^{k+1} X^k)\\
\sim_{AC_3}& (xyXyxY,Y^{k+1} X^k)\\
\sim_{ACM}& (xyXyxY,c^{-1} xyXy^{k+1}xY^{k+2} c), c=y^{k+1}\\
\sim_{AC_3}& (xyXyxY,xyXy^{k+1}xY^{k+2})\\
\sim_{ACM}& (c^{-1}xyXyxYYXy c,xyXy^{k+1}xY^{k+2}), c=y^{k+2}XyxY\\
\sim_{AC3}& (xyXyxYYXy, xyXy^{k+1}xY^{k+2})\\
\sim_{AC2}& (xyXyxYYXy, y^{k+2}XY^{k+1}xYX)\\
\sim_{ACM}& (xyXyxYYXy,c^{-1} xyXy^kXY^k c), c=xyXy\\
\sim_{AC3}& (xyXyxYYXy, xyXy^kXY^k)\\
\sim_{AC2}& (YxyyXYxYX, xyXy^kXY^k)\\
\sim_{ACM}& (c^{-1} YxyyXYxYX c, xyXy^kXY^k), c=Y^{k-1}xY^{k-1}xYYXy\\
\sim_{AC3}& ( YxyyXYxYX, xyXy^kXY^k)\\
\sim_{ACM}& (c^{-1} x^kY^{k+1} c, xyXy^kXY^k), c=Y^{k-1}xY^{k-1}xYYXy\\
\sim_{AC3}& ( x^kY^{k+1}, xyXy^kXY^k)\\
\sim_{ACM}& ( x^kY^{k+1}, c^{-1} xyxYXY c), c=Y^{k}\\
\sim_{AC3}& ( x^kY^{k+1}, xyxYXY)
\end{array}
$$
\end{proof}

\begin{lem}
\label{le:AKn_phi_x_yx}
$\varphi(\AK(n)) \sim_{AC} \AK(n)$ for $\varphi\in\Aut(F_2)$
defined by $\varphi(x) = x$, $\varphi(y) = yx$.
\end{lem}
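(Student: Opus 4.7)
The plan is to follow the exact recipe used in Lemmas \ref{le:AKn_phi_y_x} and \ref{le:AKn_phi_x_Y}: compute $\varphi$ applied to each relator of $\AK(n)$, then exhibit an explicit sequence of AC-moves and ACM-moves reducing $\varphi(\AK(n))$ back to $\AK(n)$. Substituting $\varphi(x)=x$, $\varphi(y)=yx$, so that $\varphi(Y)=XY$, and reducing freely, one gets $\varphi(xyxYXY) = x(yx)x(XY)X(XY) = xyxYX^2Y$ and $\varphi(x^kY^{k+1}) = x^k(XY)^{k+1}$. So the task is to show
$$(xyxYX^2Y,\ x^k(XY)^{k+1})\sim_{AC}(xyxYXY,\ x^kY^{k+1}).$$

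The first relator is longer by only a single letter (an extra $X$), so I would first try to eliminate it by an ACM-move performed in the one-relator group $\gpr{x,y}{x^k(XY)^{k+1}}$, aiming to arrive at a pair of the form $(xyxYXY,\ x^k(XY)^{k+1})$. The second component is more challenging: its total length has increased by $k+1$, and the job is to collapse each syllable $XY$ in $(XY)^{k+1}$ into a single $Y$. Modulo the first relator one has the rewriting rule $XYX\sim YXY$ (the inverse of $xyx=yxy$), which allows an $X$ to be pushed past a $Y$; repeated conjugations and multiplications of this form applied inside the one-relator group $\gpr{x,y}{xyxYXY}$ should strip $X$'s from $x^k(XY)^{k+1}$ one at a time, eventually producing $x^kY^{k+1}$. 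In practice it is often cheaper to work inside $\gpr{x,y}{xyxYX^2Y}$ first and only normalise the first relator at the very end.

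The genuine obstacle is discovering the specific sequence of moves: the proofs of the two preceding lemmas were, as the authors note, obtained by computer search, and the same will be needed here. Concretely, I would feed the pair $(xyxYX^2Y,\ x^k(XY)^{k+1})$ to the ACM-search procedure of Section \ref{se:newmove}, with a length bound somewhat larger than the one implicit in Lemma \ref{le:AKn_phi_x_Y} (since the initial total length is now $3k+9$), and with $D\ge 2$ $R$-completions, which Theorem \ref{th:effectiveness} and the authors' experiments indicate is sufficient for the conjugate-depths arising in $\AK(n)$. Each step of the resulting trajectory is then transcribed in the same tabular AC/ACM format used in Lemma \ref{le:AKn_phi_x_Y}, and verified move by move.

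Finally, combined with Lemmas \ref{le:AKn_phi_y_x} and \ref{le:AKn_phi_x_Y}, this lemma yields $\Aut(F_2)\le\End_{\AC}(\AK(n))$ for every $n$, since the three Nielsen transformations $(x,y)\mapsto(y,x)$, $(x,y)\mapsto(x,Y)$, $(x,y)\mapsto(x,yx)$ generate $\Aut(F_2)$, and $\End_{\AC}(\AK(n))$ is closed under composition.
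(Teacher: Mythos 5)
Your overall strategy coincides with the paper's --- compute the image of the two relators under $\varphi$ and then exhibit an explicit chain of AC- and ACM-moves found by computer search --- but as written the proposal stops exactly where the mathematical content of the lemma begins. The lemma is a constructive existence statement, and its proof \emph{is} the certificate: the paper gives an explicit chain containing three ACM-moves interleaved with $AC_2$/$AC_3$-moves, each ACM-move justified by an identity in the relevant one-relator group that is verified in Appendix \ref{se:used_moves}. You describe how you would go about finding such a sequence but do not produce one, so there is a genuine gap: nothing in the proposal establishes that the required trivialization exists. Note also that Theorem \ref{th:effectiveness} only guarantees that the search enumerates conjugates up to a given depth; it does not guarantee that a trivializing path exists within any prescribed length or depth bound, so ``run the search with $D\ge 2$'' is a plan, not an argument.

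Two smaller points. First, $x^k(XY)^{k+1}$ is not freely reduced; its reduced form is $x^{k-1}(YX)^kY$, which is the form the paper works with. Second, your proposed opening step --- removing the extra $X$ from $xyxYXXY$ by a single ACM-move over the second relator, i.e.\ asserting that $xyxYXXY$ is conjugate to $(xyxYXY)^{\pm1}$ in $\gpr{x,y}{x^{k-1}(YX)^kY}$ --- is an unverified claim, and it is not how the discovered sequence actually proceeds. The paper's chain first inverts and conjugates the \emph{first} component into $xxyXYXy$, uses it to rewrite the second component into $x^{k-1}yX^{k-1}yXY$, then converts the first component into $x^kY^{k+1}$ modulo that, and only at the last step turns the second component into $xyxYXY$ (arriving at $\AK(n)$ with the relators swapped). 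So even granting the search-based method, the specific hand-sketched route may fail; the order in which the two components are normalized is essential to the argument that was actually found.
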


\begin{proof}
The pair $(\varphi(xyxYXY),\varphi(x^k Y^{k+1})) = (xyxYXXY, x^{k-1} (YX)^{k} Y)$
can be modified as follows:
	$$
	\begin{array}{rl}
	=& (xyxYXXY, x^{k-1} (YX)^{k} Y)\\
	\sim_{AC_3}& (YxyxYXX,x^{k-1} (YX)^{k} Y)\\
	\sim_{AC_3}& (xxyXYXy,x^{k-1} (YX)^{k} Y)\\
	\sim_{ACM}& (xxyXYXy,c^{-1} x^{k-1}yX^{k-1}yXY c), c=x^{k-3}(YX)^kY\\
	\sim_{AC_3}& (xxyXYXy,x^{k-1}yX^{k-1}yXY)\\
	\sim_{AC_2}& (YxyxYXX,x^{k-1}yX^{k-1}yXY)\\
	\sim_{ACM}& (c^{-1}x^kY^{k+1}c,x^{k-1}yX^{k-1}yXY), c=xyX^{k-1}yXYXy\\
	\sim_{AC_3}& (x^kY^{k+1},x^{k-1}yX^{k-1}yXY)\\
	\sim_{AC_2}& (x^kY^{k+1},yxYx^{k-1}YX^{k-1})\\
	\sim_{ACM}& (x^kY^{k+1},c^{-1} xyxYXY c), c=yX^{k-1}yXY\\
	\sim_{AC_3}& (x^kY^{k+1},xyxYXY)\\
	\end{array}
	$$
\end{proof}

\begin{prop}
$\varphi(\AK(n)) \sim_{AC} \AK(n)$ for any $n > 2$ and $\varphi\in\Aut(F_2)$.
\end{prop}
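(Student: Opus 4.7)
The plan is to combine the three preceding lemmas with the monoid machinery from Lemmas~\ref{le:same_phi} and~\ref{le:new_phi} and with the classical Nielsen generation of $\Aut(F_2)$.

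First I would verify that $G := \End_{\AC}(\AK(n)) \cap \Aut(F_2)$ is a subgroup of $\Aut(F_2)$. Closure under composition is a monoid property of $\End_{\AC}$: given $\varphi, \psi \in \End_{\AC}(\AK(n))$, apply Lemma~\ref{le:new_phi} to the relation $\AK(n) \sim_{\AC} \varphi(\AK(n))$ with the ``other'' pair taken to be $\psi(\AK(n))$, obtaining $\psi(\AK(n)) \sim_{\AC} \varphi\psi(\AK(n))$; transitivity then yields $\varphi\psi \in \End_{\AC}(\AK(n))$. Closure under inverses uses Lemma~\ref{le:same_phi}: for $\varphi \in G$ the inverse $\varphi^{-1}$ still lies in $\Aut(F_2) \subseteq \CB_2$, so applying $\varphi^{-1}$ component-wise to $\AK(n) \sim_{\AC} \varphi(\AK(n))$ gives $\varphi^{-1}(\AK(n)) \sim_{\AC} \AK(n)$, i.e., $\varphi^{-1} \in G$.

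Second, by the classical Nielsen theorem, $\Aut(F_2)$ is generated (as a group) by a transposition of generators, a basis inversion, and one elementary transvection. The three automorphisms $\varphi_1 : (x,y) \mapsto (y,x)$, $\varphi_2 : (x,y) \mapsto (x, y^{-1})$, and $\varphi_3 : (x,y) \mapsto (x, yx)$ handled in Lemmas~\ref{le:AKn_phi_y_x}, \ref{le:AKn_phi_x_Y}, and \ref{le:AKn_phi_x_yx} are of exactly these three types: for example, $\varphi_1\varphi_2\varphi_1 : (x,y) \mapsto (x^{-1}, y)$ inverts $x$ and $\varphi_1\varphi_3\varphi_1 : (x,y) \mapsto (xy, y)$ is the standard Nielsen transvection. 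Hence $\{\varphi_1, \varphi_2, \varphi_3\}$ generates $\Aut(F_2)$.

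Combining the two observations above with the three preceding lemmas (which place $\varphi_1, \varphi_2, \varphi_3$ inside $G$), we conclude $G = \Aut(F_2)$, and the proposition follows. The real content lives in Lemmas~\ref{le:AKn_phi_y_x}--\ref{le:AKn_phi_x_yx}, each established by a computer-produced sequence of ACM-moves; the present step is a clean algebraic deduction whose only subtlety is the closure-under-inverses argument for $G$.
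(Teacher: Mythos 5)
Your argument is correct and is essentially the paper's own proof: the paper likewise concludes directly from the fact that the three automorphisms of Lemmas~\ref{le:AKn_phi_y_x}--\ref{le:AKn_phi_x_yx} generate $\Aut(F_2)$. You merely make explicit the closure-under-composition and closure-under-inverses steps (via Lemmas~\ref{le:new_phi} and~\ref{le:same_phi}) that the paper leaves implicit in its monoid discussion of $\End_{\AC}$, and both of those verifications are sound.
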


\begin{proof}
Automorphisms considered in Lemmas \ref{le:AKn_phi_y_x}, \ref{le:AKn_phi_x_Y}
and \ref{le:AKn_phi_x_yx} generate $\Aut(F_2)$.
Hence the proposition holds for any $\varphi\in\Aut(F_2)$.
\end{proof}

The next corollary implies that adding automorphism-moves to AC-moves
does not increase orbits for $\AK(n)$ presentations.

\begin{cor}\label{co:auto}
If $(u, v)\sim_{AC}\AK(3)$, then
$(\varphi(u), \varphi(v))\sim_{AC}\AK(3)$
for every
$\varphi\in\Aut(F_2)$.
\qed
\end{cor}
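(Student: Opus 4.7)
The corollary is essentially a formal consequence of the preceding Proposition, combined with Lemma \ref{le:same_phi}. My plan is to chain these two results via transitivity of $\sim_{AC}$.

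First I would unpack the setup. Every $\varphi \in \Aut(F_2)$ corresponds, via the identification $\varphi \leftrightarrow (\varphi(x),\varphi(y))$, to an element of the monoid $\CB_2$, since automorphisms of $F_2$ in particular send $\{x,y\}$ to a pair that normally generates $F_2$. Hence the hypothesis of Lemma \ref{le:same_phi} is satisfied for any $\varphi \in \Aut(F_2)$, meaning AC-equivalence is preserved under componentwise application of automorphisms.

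The main argument is then a short chain. Fix $\varphi \in \Aut(F_2)$ and assume $(u,v) \sim_{AC} \AK(3)$. Applying Lemma \ref{le:same_phi} yields
\[
(\varphi(u), \varphi(v)) \sim_{AC} \varphi(\AK(3)).
\]
By the Proposition immediately preceding the corollary (with $n = 3$), we have $\varphi(\AK(3)) \sim_{AC} \AK(3)$. Transitivity of $\sim_{AC}$ then gives $(\varphi(u), \varphi(v)) \sim_{AC} \AK(3)$, which is what the corollary asserts.

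There is no real obstacle here: the substantive work has been done in Lemmas \ref{le:AKn_phi_y_x}, \ref{le:AKn_phi_x_Y}, and \ref{le:AKn_phi_x_yx}, where explicit AC-sequences for the three generating Nielsen automorphisms were produced. The only subtlety worth flagging is making sure Lemma \ref{le:same_phi} is invoked with an element of $\CB_2$ rather than of $\End(F_2)$ at large, and this is immediate for automorphisms. The \verb|\qed| already placed by the authors suffices since the argument is this two-line transitivity.
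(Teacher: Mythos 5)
Your proof is correct and matches the paper's intended argument: the corollary is stated with no written proof precisely because it is the two-line combination of Lemma \ref{le:same_phi} (applicable since any $\varphi\in\Aut(F_2)$ lies in $\CB_2$) with the preceding Proposition that $\varphi(\AK(n))\sim_{AC}\AK(n)$, followed by transitivity. Nothing to add.
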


It is natural to raise a question if a similar result holds for all balanced presentations of $\One$.
We performed experiments with several randomly generated Miller--Schupp presentations.
The results were not always positive, i.e., for some presentations we were unable
to prove $\AC$-equivalence to their automorphic images.

\begin{conj*}
It is not true that for every $(u,v)\in\CB_2$ and $\varphi\in\Aut(F_2)$
$(u,v)\sim_{\AC} (\varphi(u),\varphi(v))$.
\end{conj*}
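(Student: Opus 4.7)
The plan is to prove the conjecture by exhibiting a specific witness: a pair $(u,v)\in\CB_2$ and an automorphism $\varphi\in\Aut(F_2)$ for which $(u,v)\not\sim_{\AC}(\varphi(u),\varphi(v))$. The first step is a structural reduction. Since $\Aut(F_2)$ is generated by Nielsen transformations and Nielsen transformations are AC-moves, one has $(x,y)\sim_{\AC}(\varphi(x),\varphi(y))$ for every $\varphi\in\Aut(F_2)$ (as noted in the paper immediately after Lemma~\ref{le:new_phi}). Combining this with Lemma~\ref{le:same_phi}, if $(u,v)\in\CC$ then
\[
(\varphi(u),\varphi(v))\sim_{\AC}(\varphi(x),\varphi(y))\sim_{\AC}(x,y)\sim_{\AC}(u,v),
\]
so the conjecture is vacuous on $\CC$. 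Any witness must therefore lie outside $\CC$, i.e., it must itself be a counterexample to ACC; moreover Corollary~\ref{co:auto} excludes the $\AK(n)$ family. The plan is therefore to restrict attention to the remaining potential counterexamples from the introduction, primarily the Miller--Schupp presentations and the B.~H.~Neumann example.

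The second step is computational. I would enumerate short Miller--Schupp candidates $(u,v)$, apply each of the three generating automorphisms used in Lemmas~\ref{le:AKn_phi_y_x}, \ref{le:AKn_phi_x_Y}, and~\ref{le:AKn_phi_x_yx}, and then run the enhanced AC/ACM-search of this paper, suitably length-bounded, in an attempt to AC-transform $(\varphi(u),\varphi(v))$ into $(u,v)$. The authors' own experimental remark suggests that for some candidates this search fails persistently, so one would rank candidates by the apparent AC-distance between the two orbits and isolate the most recalcitrant example as the proposed witness, ideally working in the quotient $\CB_2/{\sim}$ of Section~\ref{se:equivalence_rel} to compress the search space.

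The hard part --- and the place where the plan is unlikely to succeed with current technology --- is \emph{certifying} AC-non-equivalence. Bounded AC/ACM-search is one-sided: it can confirm equivalence but never refute it. To rule out equivalence one needs an AC-invariant separating $(u,v)$ from $(\varphi(u),\varphi(v))$. The fundamental obstacle is that essentially every known AC-invariant --- the abelianization matrix up to row operations, Alexander ideals, orbits of image-tuples in finite or solvable quotients as in~\cite{Miasnikov:1984, Borovik-Lubotzky-Myasnikov:2005} --- is functorial under $\Aut(F_2)$ and therefore automatically identifies a pair with its automorphic image. The genuine mathematical content of the conjecture is the construction of a quotient $\psi\colon F_2\to G$ that is sensitive to the AC-orbit but \emph{not} to the $\Aut(F_2)$-action, presumably via a normal subgroup of $F_2$ which is respected by AC-moves on some designated pair but is not $\Aut(F_2)$-invariant. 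I expect this step to be essentially as deep as ACC itself; without such a new obstruction the plan bottoms out at the heuristic evidence already collected by the authors.
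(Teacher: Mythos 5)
The statement you were asked to prove is labeled as a conjecture in the paper, and the paper does not prove it: the authors offer only computational evidence, namely the list of Miller--Schupp pairs $(u,v)$ in Table~\ref{tab:MS-presentations} for which their search failed to establish $(u,v)\sim_{\AC}(\varphi(u),\varphi(v))$ for the three automorphisms generating $\Aut(F_2)$. Your proposal is, in substance, the same programme. The structural reduction is correct and matches the paper's framing: since $(x,y)\sim_{\AC}(\varphi(x),\varphi(y))$ for every $\varphi\in\Aut(F_2)$, Lemma~\ref{le:same_phi} forces any witness to lie outside $\CC$, and Corollary~\ref{co:auto} excludes the $\AK(n)$ family, so one is pushed to the remaining potential counterexamples. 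Your computational step is precisely the experiment the authors ran, down to the choice of generating automorphisms and the use of the quotient $\CB_2/\sim$ of Section~\ref{se:equivalence_rel}.

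Where you go beyond the paper is in articulating why the programme cannot currently be completed: bounded AC/ACM-search is one-sided, and every known AC-invariant (abelianization data, finite and solvable quotients, etc.) is functorial under $\Aut(F_2)$ and hence identifies a pair with its automorphic image by construction. That diagnosis is accurate, and it names the genuine gap --- one that is present in your proposal and in the paper alike. Neither you nor the authors produce a certificate of non-equivalence, so the statement remains a conjecture supported only by the failure of a length-bounded search; your write-up is honest about this, which is the correct conclusion to reach.
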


Note that the conjecture above immediately implies negative answer to ACC.
In Table \ref{tab:MS-presentations} the reader can find particular balanced presentations
suspected to satisfy the conjecture above.

\section{Canonical forms of presentations}\label{se:equivalence_rel}

For a given relation tuple $\ovu\in \CB_n$
the search space $\CC_\ovu$ is infinite
and no computer procedure can exhaust all of its elements.
To reduce the search space
one can introduce an equivalence relation $\sim$
on $\CC_\ovu$ (or on $F_n^n$),
define efficiently computable representatives $NF(\ovu)$
for equivalence classes, and study the quotient space:
$$
\CB_n/\sim \ =\ \{\NF{\ovu} \mid \ovu\in\CB_n\}.
$$
That way one can achieve compression of the search space as
a single element $\NF{\ovu}$ representing its (infinite) equivalence class.
Clearly, coarser relations on $F_n^n$ give better compression.

Below we consider two equivalence relations on $\CB_2$.
Same results hold for $\CB_n$ with $n>2$.
The first one (referred to as a \emph{cyclic relation} here) was used
by Casson in a series of unpublished work (according to Bowman--McCaul)
and by Bowman--McCaul (who ``followed'' Casson).
The second relation is new and is significantly stronger.

\subsection{Cyclic relation}
\label{se:cyclic_relation}

Let $\sim$ be the transitive closure of the following pairs in $F_2^2$:
\begin{itemize}
\item
$(u,v) \sim (v,u)$,
\item
$(u,v) \sim (u^{-1},v)$,
\item
$(u,v) \sim (u,c^{-1}vc)$,
\end{itemize}
where $u,v,c$ are arbitrary words in $F_2$.
We call $\sim$ a \emph{cyclic relation} on $F_2^2$.

To define canonical representatives for the
cyclic relation we do the following.
Fix any order on generators, say $x_1<x_1^{-1}<x_2<x_2^{-1}$
and denote by $<$ the corresponding shortlex order on $F_2$
and, further, the corresponding lexicographic order on $F_2^2$.
Let $(u,v) \in F_2^2$. It is easy to see that taking
the least cyclic permutation of $u^{\pm 1}$,
the least cyclic permutation of $v^{\pm 1}$,
and ``sorting'' the obtained words,
produces the least representative of the equivalence class of $(u,v)$,
denoted by $\NF{u,v}$. Clearly, so-defined normal form is an efficiently computable.

It easily follows from the definition that
$\NF{u,v}\in\CB_2$ for any $(u,v)\in\CB_2$.
Hence, AC-moves can be naturally defined on $\CB_2/\sim$:
$$
\NF{u,v} = (u,v) \ \ \stackrel{\nu}{\mapsto} \ \ \NF{\nu(u,v)},
$$
where $\nu$ is an $\AC$-move.
The problem with this approach is that computing the cyclic normal form
negates applications of the $\AC_3$-move.
That can result in the component $\CC_\ovu$ being broken into
subcomponents (i.e., $\CC_\ovu$ can become disconnected).
In particular, Bowman--McCaul implementation
(found here \url{http://www.math.utexas.edu/users/sbowman/ac-bfs
.tar.gz}) does not take the normal form of a pair
obtained by an $\AC_3$-move,
which completely negates the advantage of using normal forms.
As we explain below, ACM-moves can solve this problem.

\subsection{Cyclic relation with automorphisms}
\label{se:equivalence}

Define an equivalence relation $\sim$ on $F_2^2$ by taking a closure
of the following pairs:
\begin{itemize}
\item
$(u,v) \sim (v,u)$,
\item
$(u,v) \sim (u^{-1},v)$,
\item
$(u,v) \sim (u,c^{-1}vc)$,
\item
$(u,v) \sim (\varphi(u), \varphi(v))$,
\end{itemize}
where $u,v,c$ are arbitrary words in $F_2$ and
$\varphi$ is an arbitrary automorphism in $\Aut(F_2)$.
Note that the defined relation makes $(u,v)$ equivalent to $(\varphi(u),\varphi(v))$
which, in general, is not known to be $\AC$-equivalent to $(u,v)$.
Hence, it is possible that an equivalence class of
$(u,v)$ contains an element which is not $\AC$-equivalent to $(u,v)$.
Nevertheless, the following is true.

\begin{prop}\label{pr:AK-n-Aut_moves}
For every $u,v,u',v'\in F_2$ if
\begin{itemize}
\item
$\AK(n)\sim_{AC}(u,v)$, and
\item
$(u,v) \sim (u',v')$,
\end{itemize}
then $\AK(n)\sim_{AC}(u',v')$.\end{prop}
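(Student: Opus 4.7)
The plan is to induct on the length of a shortest chain of elementary $\sim$-moves transforming $(u,v)$ into $(u',v')$. Since $\sim_{AC}$ is transitive, it suffices to establish the implication for a single elementary move and then iterate.

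First I would dispose of the three ``combinatorial'' move-types by observing that each one is already an $\AC$-equivalence. The inversion $(u,v)\mapsto(u^{-1},v)$ is literally $AC_2$, and the conjugation $(u,v)\mapsto(u,c^{-1}vc)$ is literally $AC_3$. The swap $(u,v)\mapsto(v,u)$ is not among $AC_1,AC_2,AC_3$, but it is a standard Nielsen transformation and is well known to be expressible by a short sequence of $AC_1$ and $AC_2$ moves. In each of these three subcases $(u',v')\sim_{AC}(u,v)$, so transitivity of $\sim_{AC}$ combined with the hypothesis $\AK(n)\sim_{AC}(u,v)$ gives $\AK(n)\sim_{AC}(u',v')$.

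The substantive case is the automorphism move $(u',v')=(\varphi(u),\varphi(v))$ with $\varphi\in\Aut(F_2)$. Here I would chain two ingredients already in hand. By Lemma \ref{le:same_phi}, applying $\varphi$ componentwise preserves $\AC$-equivalence, so $\AK(n)\sim_{AC}(u,v)$ gives $\varphi(\AK(n))\sim_{AC}(\varphi(u),\varphi(v))=(u',v')$. The proposition immediately preceding Corollary \ref{co:auto}, proved by combining Lemmas \ref{le:AKn_phi_y_x}, \ref{le:AKn_phi_x_Y}, and \ref{le:AKn_phi_x_yx} over a generating set of $\Aut(F_2)$, supplies $\varphi(\AK(n))\sim_{AC}\AK(n)$. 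Transitivity then yields $\AK(n)\sim_{AC}(u',v')$, closing the induction step.

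The main obstacle is precisely the automorphism case, and it is already paid for by the three computer-assisted lemmas in Section \ref{se:automorphisms}. The present proposition is essentially a bookkeeping statement: once those lemmas are granted, the argument reduces to a short induction with case analysis and no further computational work is required.
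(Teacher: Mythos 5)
Your proof is correct and follows essentially the same route as the paper: the paper's one-line proof ``Follows from Corollary \ref{co:auto}'' silently bundles exactly the two ingredients you make explicit, namely that the swap, inversion, and conjugation generators of $\sim$ are already $\AC$-equivalences, and that the automorphism generator is handled by combining Lemma \ref{le:same_phi} with $\varphi(\AK(n))\sim_{AC}\AK(n)$. Your unpacking (including the observation that $\Aut(F_2)\subseteq\CB_2$ so Lemma \ref{le:same_phi} applies, and that the swap is a composition of $AC_1$, $AC_2$, $AC_3$ moves) is just a more detailed rendering of the same argument.
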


\begin{proof}
Follows from Corollary \ref{co:auto}.
\end{proof}

Proposition \ref{pr:AK-n-Aut_moves} allows us to replace
the original component $\CC=\CC_{\AK(3)}$ with its factor $\CC/\sim$
which is much smaller.
The problem that taking a normal form of a
pair negates $\AC_3$-moves
is still relevant if we use the original $\AC$-moves.
That is where ACM-moves really help.
It follows from Theorem \ref{th:effectiveness}
that choosing  sufficiently large value of the parameter
$D$ we can produce any conjugate of $u$ in $\gpr{x,y}{v}$.

As in Section \ref{se:cyclic_relation},
the \emph{normal form} of the pair $(u,v)$ is defined as the minimal
pair in its equivalence class. Below we show that normal forms can be computed
efficiently. Our main tool is the following classic result.

\begin{theorem*}[Whitehead theorem, see {\cite[Proposition 4.20]{lyndon2015combinatorial}}]
\label{thm:whitehead}
Let $w_1, \ldots, w_t$, $w_1', \ldots, w_t'$ be cyclic words in a free group $F$
such that:
$$
w_1' = \alpha(w_1), \ldots, w_t' = \alpha(w_t)
$$
for some $\alpha\in\Aut(F)$.
If $\sum |w_k'|$ is minimal among
all $\sum |\alpha'(w_k)|$ for $\alpha'\in\Aut(F)$, then
$\alpha = \tau_1\ldots\tau_n$, $n\geq 0$, where
  $\tau_1,\ldots,\tau_n$ are \emph{Whitehead} automorphisms
  and for each $i$ the length
  $\sum_k |\tau_1\cdots \tau_i(w_k)| \leq \sum_k |w_k|$ with strict
  inequality unless $\sum|w_k| = \sum|w_k'|$.
  \qed
\end{theorem*}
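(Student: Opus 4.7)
The plan is to prove this via Whitehead's classical peak reduction argument, formulated in terms of the \emph{Whitehead graph} of a tuple of cyclic words. First recall that $\Aut(F)$ is generated by Whitehead automorphisms (permutations of $X^{\pm}$, and the ``multiplier'' automorphisms $(A,a)$ defined by a subset $A \subseteq X^\pm$ containing $a$ but not $a^{-1}$). Fix any factorization $\alpha = \sigma_1 \cdots \sigma_N$ into Whitehead automorphisms, and consider the length sequence $L_i = \sum_k |\sigma_1 \cdots \sigma_i(w_k)|$ with $L_0 = \sum |w_k|$ and $L_N = \sum |w_k'|$. If this sequence is already non-increasing we are done, after collapsing trivial suffixes that could violate the strict-inequality clause. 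Otherwise the sequence contains a \emph{peak}: an index $i$ with $L_{i-1} \leq L_i$ and $L_{i+1} \leq L_i$.

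The central step is the \textbf{peak reduction lemma}: any such two-step peak $\sigma_{i+1}\sigma_i$ can be rewritten as a product $\tau_1' \cdots \tau_m'$ of Whitehead automorphisms so that every partial product evaluated on the current tuple has total length strictly less than $L_i$. Repeatedly applying this lemma at the highest remaining peak strictly decreases the multiset of values above the minimum $L_N$ in the length sequence, so the process terminates in a factorization whose length sequence is non-increasing. The strict inequality clause is inherited from the peak-reduction step; the only way equality $|\tau_1\cdots\tau_i(w_k)|$-sum equals $|w_k|$-sum can be forced all the way down is if the initial and final sums already coincide, which is exactly the escape hatch in the statement.

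The workhorse for the peak reduction lemma is the \emph{Whitehead graph} $W(\overline{w})$ of the tuple: a multigraph on vertex set $X^\pm$ whose edges are parametrized by consecutive pairs occurring cyclically in the $w_k$ (with an edge from $x^{-1}$ to $y$ for each occurrence of $xy$). I would invoke the fundamental lemma that for a multiplier Whitehead automorphism $\sigma = (A,a)$ the length change $\sum|\sigma(w_k)| - \sum|w_k|$ equals a signed count of edges of $W(\overline{w})$ crossing the cut determined by $A$, and consequently $\sum|\sigma(w_k)| \leq \sum|w_k|$ holds if and only if $A$ (or its complement with $a$ replaced by $a^{-1}$) gives a sufficiently sparse cut of $W(\overline{w})$. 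At a peak, the same graph-theoretic criterion applied after $\sigma_i$ and before $\sigma_{i+1}$ yields two cuts of $W(\sigma_1 \cdots \sigma_{i-1}(\overline{w}))$ whose union/intersection/symmetric difference defines the replacement automorphisms $\tau_j'$; a careful case analysis on the four regions $A_1 \cap A_2$, $A_1 \setminus A_2$, $A_2 \setminus A_1$, $A_1^c \cap A_2^c$ and on the position of the multipliers $a_1, a_2$ produces these explicitly.

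The main obstacle is precisely this case analysis. Handling the boundary cases where $a_1 \in A_2$ or $a_2 \in A_1$ (or worse, where $a_1 = a_2^{-1}$) requires replacing individual $\tau_j'$ by a short sequence of Whitehead automorphisms that conjugate away the offending letter before applying the cut, and one must verify that each intermediate tuple shrinks by a strictly positive integer amount rather than merely weakly decreasing. Once this is in place, everything else, including the inductive wrap-up and the treatment of the permutation-type Whitehead automorphisms (which preserve total length and can be absorbed freely), is bookkeeping.
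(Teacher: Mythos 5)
The paper does not actually prove this statement---it quotes it as a classical theorem and defers to Lyndon--Schupp, Proposition 4.20, whose proof is precisely the Higgins--Lyndon peak-reduction argument via Whitehead graphs and cut-counting that you outline (factor $\alpha$ into Whitehead automorphisms, locate a peak in the length sequence, replace the two-step peak by a product of Whitehead automorphisms derived from the two cuts, and induct on the multiset of lengths above the minimum). Your plan is therefore the same approach as the cited source and is structurally correct; the one caveat is that essentially all of the technical content of the theorem lives in the peak-reduction case analysis (the four regions of the two cuts and the placement of the multipliers), which you explicitly acknowledge but do not carry out, so as written this is a correct roadmap to the standard proof rather than a complete proof.
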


Recall \cite[Section 1.4]{lyndon2015combinatorial} that
Whitehead automorphisms are automorphisms of two types:
\begin{itemize}
\item
(length-preserving) automorphisms that permute the letters $X^\pm$;
\item
automorphisms that for some fixed ``multiplier'' $a\in X^\pm$
carry each of the elements $x\in X$ into one of $x$, $xa$, $a^{-1}x$, or $a^{-1}xa$.
\end{itemize}
There are exactly $20$ Whitehead automorphisms for a free group of rank $2$.
According to the Whitehead theorem
if the total length of a given
tuple of cyclic words can be decreased by an application of an automorphism,
then it can be decreased by an application of a single Whitehead automorphism.
Hence, to compute the normal form of a pair $(u,v)$ we do the following.
\begin{itemize}
\item
First, minimize the total length $|u|+|v|$ of $(u,v)$
by applying $12$ non-length-preserving Whitehead automorphisms
while the total length decreases.
\item
Then, construct a set of all equivalent pairs of the least total length
by applying all automorphisms.
\item
Finally, choose the least cyclic normal form among the pairs of the least length.
\end{itemize}
The procedure described above is efficient except, maybe, the second step,
where we construct the set of all pairs of the least total length.
Currently there are no theoretical polynomial bounds on the size of that set.
Nevertheless, in our computations the maximal size observed
was $112$ for $\AK(3)$ equivalent presentations with $|r| \le 20$ bound. The average size
of the set of all pairs of the least total length was $9$.

\section{Groups with high Dehn function}
\label{se:org_groups}

One potential challenge for computer enumeration techniques is described by
Bridson in \cite{bridson2015complexity}
and Lishak in \cite{lishak2015balanced}.
Both papers use a similar idea based on properties of the following one-relator group:
\begin{equation}\label{eq:Baumslag}
\gpr{x,y}{y^{-1}x^{-1}yx y^{-1}xy = x^2},
\end{equation}
introduced by Baumslag in \cite{Baumslag:1969}, satisfying the inequality:
\begin{equation}\label{eq:Baumslag-Dehn}
Dehn(n) \ge Tower_2(\log_2(n)),
\end{equation}
first observed in \cite{Gersten:1992}.
Lishak constructs a particular sequence of
balanced presentations parametrized by $n\in\MN$:
$$\ovu_n = (r,w_ny^{-1}) \in\CB_2,$$
where $w_n\in F(X)$, satisfying the following conditions.
\begin{itemize}
\item
$\ovu_n$ is $\AC$-equivalent to the canonical presentation.
\item
The number of steps required to obtain the canonical presentation
is super-exponential in $n$.
\end{itemize}
The later property comes as a consequence of the inequality (\ref{eq:Baumslag-Dehn}).

Being very curious about possibility that that is the reason why
our program fails to find $\AC$-trivialization of $\AK(3)$ we tested
all words obtained in our experiments. For each word $r$ we attempted
to bound the Dehn function of the group $\gpr{x,y}{r}$.
For that purpose we used D. Holt's package \cite{holt2000} to identify automatic groups
(automatic groups have at most quadratic Dehn functions) that left us
with $5356$ ``perhaps non-automatic'' one-relator groups.
Among those $1205$ we classified as Baumslag--Solitar type presentations, i.e.,
the presentations with the relation $(u^n)^v = u^m$ for some $u,v\in F_2$. None
of Baumslag--Solitar type presentations satisfied the condition $u\sim_{F} v$,
i.e., no presentations were identified as Baumslag-type presentations \eqref{eq:Baumslag}.
Clearly, this is a heuristic approach and we can not guarantee that our list
of presentations does not contain Baumslag groups, as isomorphism problem
for one-relator groups is not known to be decidable/undecidable.
Also, we were unable to classify $4151$ remaining presentations. In case someone would like to further investigate this, we have published the obtained lists here: \url{https://github.com/stevens-crag/ak3_types}.

In light of these heuristic results it seems to be a very interesting computational
problem to classify short one-relator groups $\gpr{x,y}{r}$ with $|r|\le 20$.
Find precise upper bounds on their Dehn functions.

\begin{conj*}
Baumslag's group $\gpr{x,y}{y^{-1}x^{-1}y x y^{-1}xy = x^2}$
has the highest Dehn function among all one-relator groups.
\end{conj*}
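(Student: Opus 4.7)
Since this is a conjecture rather than a theorem, the best one can hope for is a principled attack. The natural route is via Magnus' classical decomposition of one-relator groups and induction on the Magnus hierarchy. Given $G = \gpr{X}{r}$, Magnus' theorem expresses $G$ as an HNN extension (or, exceptionally, an amalgamated product) over a one-relator group $G'$ whose relator has strictly smaller Magnus complexity. Iterating produces a finite hierarchy terminating in a free group, whose Dehn function is linear; the whole problem is to track how Dehn functions grow as one climbs back up.

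The first step is a quantitative bound for a single level of the hierarchy. For an HNN extension $H$ of $G'$ with associated finitely generated subgroup $A\le G'$, one expects something of the shape
\[
\mathrm{Dehn}_H(n) \le P\rb{\mathrm{Dehn}_{G'}\rb{C\cdot \mathrm{dist}^{A}_{G'}(n)}}
\]
for some polynomial $P$ and constant $C$, reflecting (i) the Dehn function of $G'$, (ii) the distortion of $A$ in $G'$, and (iii) the membership complexity of $A$ in $G'$. When the distortion is at most exponential and the membership problem is solvable in exponential time, composing up the hierarchy yields a tower-of-2 bound. This is exactly what happens in Baumslag's group: the Baumslag--Solitar subgroup $BS(1,2)$ is exponentially distorted, giving the iterated exponentials observed by Gersten.

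The second step is to show by induction on hierarchy depth that the Dehn function of every one-relator group is bounded by $\mathrm{Tower}_2(C\log_2 n)$ with $C$ depending on $|r|$. The critical missing lemma is: \emph{Magnus associated subgroups in one-relator groups have at most exponential distortion.} In principle this should follow from the Freiheitssatz together with careful Magnus rewriting, since the natural transversal given by Magnus breakdown has length bounded exponentially in the ambient length; but a clean quantitative statement does not seem to be in the literature.

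The hard part is precisely this distortion bound, which appears to be genuinely open: even showing that arbitrary finitely generated subgroups of one-relator groups have recursive distortion is nontrivial. A plausible partial route is to restrict attention to two-generator one-relator groups, where Magnus subgroups are either cyclic or again two-generator one-relator, and to analyze the finite list of combinatorial patterns produced by Magnus rewriting. Even granting the distortion bound, matching \emph{exactly} Baumslag's tower --- rather than some tower of height depending on $|r|$ --- would require a tight coordination between hierarchy depth and tower height that I see no principled way to enforce without additional input. A parallel and more concrete attack would be to produce explicit Dehn-function upper bounds for each $r$ in the list of ``perhaps non-automatic'' relators from Section~\ref{se:org_groups}, at least testing the conjecture on the presentations directly relevant to $\AK(3)$.
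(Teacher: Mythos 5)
The statement you were asked to prove is stated in the paper as a \emph{conjecture}, not a theorem: the authors give no proof and offer only heuristic computational evidence (they run the relators arising in their $\AK(3)$ search through Holt's automaticity tester, classify the Baumslag--Solitar-type survivors, and observe that none match the Baumslag pattern). So there is no proof in the paper to compare against, and your submission must be judged as a research program. You present it as such, which is the right instinct, but it is worth being explicit that it does not constitute a proof and cannot be repaired into one with the tools you invoke.

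Two gaps are decisive. First, as you yourself flag, the quantitative lemma that Magnus associated subgroups have at most exponential distortion (together with an effective membership bound) is not available in the literature, and without it the inductive composition of Dehn functions up the Magnus hierarchy never gets off the ground; asserting that it ``should follow from the Freiheitssatz together with careful Magnus rewriting'' is a hope, not an argument. Second, and more structurally: even if the induction succeeded, it would deliver only a \emph{uniform upper bound} of the shape $\mathrm{Tower}_2(f(|r|,n))$ for all one-relator groups. That is the wrong kind of statement for this conjecture, which asserts that Baumslag's group is \emph{extremal}. Since the depth of the Magnus hierarchy grows with $|r|$, an upper bound whose tower height depends on $|r|$ is entirely compatible with longer relators producing strictly taller towers than Baumslag's, i.e.\ with the conjecture being false. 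Proving the conjecture would require, in addition, an argument that no relator does better, and nothing in your sketch points toward one. Your final suggestion --- explicit Dehn-function bounds for the specific short relators produced by the $\AK(3)$ enumeration --- is the one concretely actionable item, and it is essentially the heuristic experiment the authors already report in Section \ref{se:org_groups}.
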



\section{Results}
\label{se:results}

The described algorithms were tested on several known potential counterexamples.
Our attention was mainly focused on $\AK(3)$ and Miller--Schupp presentations.
To test performance and compare with other experimental results
we also ran our programs on $\AK(2)$
and other presentations that are known to be $\AC$-equivalent to the
canonical presentation.

As we already mentioned in Section \ref{se:harvest},
we set a bound $L$ on the length on the conjugates
obtained during harvest phase.
We also set a limit on the total length of pairs to be $2L + 2$.
Notice that we need to do that as taking a normal form
described in Section \ref{se:equivalence} can increase length of one of the words
beyond $L$ (which is allowed in our implementation).
Experiments were run on a machine
with two $8$-core $3.1$ Ghz Intel Xeon CPU E5-2687W and $64$ GB RAM.

\subsection{Enumeration of $\AK(3)$-equivalent presentations}
\label{se:results:AK3}

As shown in Section \ref{se:automorphisms}, automorphism-moves can be used together
with ACM-moves when applied to $\AK(3)$-equivalent presentations.
In particular, one can use normal forms from Section \ref{se:equivalence}
to compress the array of stored presentations. Table \ref{tab:presentation_num}
shows dynamics of growth of a component of $\CC_{\AK(3)}$ constructed
by our program for different values of $L$. Each cell in Table \ref{tab:presentation_num}
corresponds to a value $L$ and a value $T$ and presents the number of pairs
of the total length equal to $T$ constructed by the program with
the single-word-bound $L$.

\begin{table}[h]
\centering
{\tiny
\begin{tabular}{ l | c | c | c | c | c | c | c | c | c | c | c }
T$\backslash$L & 10 & 11 & 12 & 13 & 14 & 15 & 16 & 17 & 18 & 19 & 20 \\ \hline
13 &  \textbf{4} &   \textbf{4} & \textbf{4} & \textbf{4} & \textbf{4} & \textbf{4} & \textbf{4} & \textbf{4} & \textbf{4} & \textbf{4} & \textbf{4}\\ \hline
14 &  \textbf{10} &  \textbf{10} & \textbf{10} & \textbf{10} & \textbf{10} & \textbf{10} & \textbf{10} & \textbf{10} & \textbf{10} & \textbf{10} & \textbf{10} \\ \hline
15 &  \textbf{70} &  \textbf{70} & \textbf{70} & \textbf{70} & \textbf{70} & \textbf{70} & \textbf{70} & \textbf{70} & \textbf{70} & \textbf{70} & \textbf{70}\\ \hline
16 &  64 &  \textbf{86} & \textbf{86} & \textbf{86} & \textbf{86} & \textbf{86} & \textbf{86} & \textbf{86} & \textbf{86} & \textbf{86} & \textbf{86}\\ \hline
17 & 220 & 416 & 454 & \textbf{458} & \textbf{458} & \textbf{458} & \textbf{458} & \textbf{458} & \textbf{458} & \textbf{458} & \textbf{458} \\ \hline
18 &  98 & 392 & 398 & \textbf{590} & \textbf{590} & \textbf{590} & \textbf{590} & \textbf{590} & \textbf{590} & \textbf{590} & \textbf{590}\\ \hline
19 & 240 & 764 & 1382 & 2854 & \textbf{3226} & \textbf{3226} & \textbf{3226} & \textbf{3226} & \textbf{3226} & \textbf{3226} & \textbf{3226}  \\ \hline
20 &  10 & 442 & 522 & 2004 & 2082 & 3352 & 3352 & \textbf{3356} & \textbf{3356} & \textbf{3356} & \textbf{3356} \\ \hline
21 &  20 & 746 & 1624 & 3870 & 8334 & 16948 & 19666 & 19690 & 19690 & \textbf{19692} & \textbf{19692} \\ \hline
22 &   0 & 438 & 570 & 2812 & 3714 & 12288 & 12584 & 23174 & 23174 & 23188 & 23192 \\ \hline
23 &   0 & 112 & 1462 & 4474 & 9194 & 21678 & 41492 & 101544 & 128356 & 128380 & 128388 \\ \hline
24 &   0 &   6 & 42 & 3400 & 3858 & 12978 & 15458 & 61100 & 64686 & 150264 & 150276 \\ \hline
25 &   0 &   0 & 110 & 4350 & 11246 & 22422 & 42550 & 102262 & 236860 & 631000 & 843778 \\ \hline
26 &   0 &   0 & 0 & 4306 & 5384 & 17930 & 19668 & 62874 & 83902 & 375818 & 394172 \\ \hline
27 &   0 &   0 & 0 & 710 & 13548 & 28176 & 51590 & 96714 & 196098 & 538380 & 1269016 \\ \hline
28 &   0 &   0 & 0 & 52 & 494 & 26008 & 27874 & 76930 & 83864 & 289920 & 364040\\ \hline
29 &   0 &   0 & 0 & 0 & 1652 & 30934 & 77162 & 123178 & 230774 & 445036 & 953378 \\ \hline
30 &   0 &   0 & 0 & 0 & 2 & 20430 & 24146 & 128556 & 138478 & 355754 & 405746\\ \hline
31 &   0 &   0 & 0 & 0 & 0 & 5854 & 62178 & 159086 & 368336 & 546680 & 1041462 \\ \hline
32 &   0 &   0 & 0 & 0 & 0 & 326 & 3338 & 122164 & 130302 & 597064 & 639362 \\ \hline
33 &   0 &   0 & 0 & 0 & 0 & 0 & 6314 & 151550 & 353810 & 730650 & 1758270 \\ \hline
34 &   0 &   0 & 0 & 0 & 0 & 0 & 62 & 128556 & 150518 & 538278 & 585132\\ \hline
35 &   0 &   0 & 0 & 0 & 0 & 0 & 0 & 22772 & 374246 & 872784 & 1519374\\ \hline
36 &   0 &   0 & 0 & 0 & 0 & 0 & 0 & 1848 & 19030 & 762768 & 813708 \\ \hline
37 &   0 &   0 & 0 & 0 & 0 & 0 & 0 & 0 & 51496 & 1016332 & 2112918\\ \hline
38 &   0 &   0 & 0 & 0 & 0 & 0 & 0 & 0 & 522 & 848998 & 946260\\ \hline
39 &   0 &   0 & 0 & 0 & 0 & 0 & 0 & 0 & 0   & 209668 & 2414958\\ \hline
40 &   0 &   0 & 0 & 0 & 0 & 0 & 0 & 0 & 0   & 19332 & 120852\\ \hline
41 &   0 &   0 & 0 & 0 & 0 & 0 & 0 & 0 & 0   & 0 & 270942\\ \hline
42 &   0 &   0 & 0 & 0 & 0 & 0 & 0 & 0 & 0   & 0 & 12062\\ \hline
\end{tabular}
}
\caption{Each cell shows the number of pairs $\AC$-equivalent to $\AK(3)$ of total length $T$
obtained by the program when run with the length bound $L$. Highlighted cells do not increase when $L$ is increased.}
\label{tab:presentation_num}
\end{table}

It took our program $10$ days to finish enumeration with the bound $L=20$,
consuming $207$ days of CPU time.
The running time with the bound $L=21$ is expected to be $60$ days.
We decided not to proceed beyond the value $L=20$.
Memory usage during the experiments was moderate and never exceeded $8$Gb.
CPU time is the main obstacle.
However, we can notice that the numbers in rows of Table \ref{tab:presentation_num} stabilize,
at least for values $T=13,\ldots,20$. For instance, we can conjecture that the number
of normal forms of $\AK(3)$-equivalent presentations of total length $20$ or less is $3356$
and there is no canonical one among them.

\subsection{Old non-counterexamples}

We also tested our program on some balanced presentations
that were eliminated from the list of potential counterexamples before us.
Our program trivializes any of them almost immediately (in less than $10$ seconds
on a single computational core)
in less than $5$ ACM-moves.
\begin{itemize}
\item $\AK(2)$: $(x^2y^{-3}, xyx(yxy)^{-1}) \sim_{AC} (x, xyx^{-1}yxy^{-3}) \sim_{AC} (x, y)$
\item Gordon presentation $(x^{-1}yx^2y^{-1}:xy^3x^{-1}y^{-4}) \sim_{AC} (xyx^{-1}y^{-2}, x) \sim_{AC} (x, y)$,
also considered in \cite{bowman2006}.
\end{itemize}

\subsection{Miller--Schupp type presentations}

We analyzed several randomly generated Miller--Schupp presentations:
$$
\gpr{x,y}{x^{-1} y^2 x = y^3,\ x=w},
$$
where $w$  has exponent sum $0$ on $x$. We attempted to trivialize them
or show $\AC$-equivalence with their automorphic images.
Both tasks were dealt with different success.
Table \ref{tab:MS-presentations}
contains pairs $(u,v)$ for  which the program failed to prove equivalence
$(u,v)\sim_{\AC} (\varphi(u),\varphi(v))$ for $\varphi\in\Aut(F_2)$
defined by $y\to y^{-1}$, $y\to yx$ and $x\to y, y\to x$.
(In particular, we could not trivialize the corresponding presentations.)
Table \ref{tab:MS-presentations2} contains Miller-Schupp
type presentations for which the program proved automorphic equivalence
$(u,v)\sim_{\AC} (\varphi(u),\varphi(v))$ for any $\varphi\in\Aut(F_2)$, but
was not able to trivialize them.
Table \ref{tab:MS-presentations3} contains trivializable Miller-Schupp presentations.
The purpose of Tables \ref{tab:MS-presentations}, \ref{tab:MS-presentations2},
and \ref{tab:MS-presentations3} is to provide reference for future experiments.

\begin{table}[h]
\centering
{\tiny
\begin{tabular}{| l | l | l |}
\hline
(xyyyXYY,xxxyXYXY) & (xyyyXYY,xxxYXyXy) & (xyyyXYY,xxxYxyXXXy)\\
\hline
(xyyyXYY,xxxYXXyXXY) & (xyyyXYY,xxxxYXyXXy) & (xyyyXYY,xxxxyXXYXy)\\
\hline
(xyyyXYY,xxxyxyXXXy) & (xyyyXYY,xxxxYXYXXY) & (xyyyXYY,xxxxyXYXXY)\\
\hline
(xyyyXYY,xxxxyXyXXY) & (xyyyXYY,xxxxyXXyXy) & (xyyyXYY,xxxyXyXXXY)\\
\hline
(xyyyXYY,xxxyXyXXXy) & (xyyyXYY,xxxxYXXYXY) & (xyyyXYY,xxxxyXXYXY)\\
\hline
(xyyyXYY,xxxYxyXXXY) & (xyyyXYY,xxxxYXXyXy) & (xyyyXYY,xxxxYXXYXy)\\
\hline
(xyyyXYY,xxxxYXyXXY) & (xyyyXYY,xxxyxYXXXy) & (xyyyXYY,xxxxYXXyXY)\\
\hline
(xyyyXYY,xxxxyXXyXY) & (xyyyXYY,xxxxYXYXXy) & (xyyyXYY,xxxyxyXXXY)\\
\hline
(xyyyXYY,xxxyxYXXXY) & (xyyyXYY,xxxxyXXyXy) & (xyyyXYY,xxxxyXYXXy)\\
\hline
(xyyyXYY,xxxyXXYXXy) & (xyyyXYY,xxxxyXyXXy) & \\
\hline
\end{tabular}
}
\caption{Miller-Schupp pairs $(u,v)$ with unknown equivalence
$(u,v)\sim_{\AC} (\varphi(u),\varphi(v))$ for $\varphi\in\Aut(F_2)$.}
\label{tab:MS-presentations}
\end{table}

\begin{table}[h]
\centering
{\tiny
\begin{tabular}{|l|l|l|}
\hline
(xxxyXXY,xyyyyXYYY) & (xxxyXXY,xyyyXYYYY) & (xyyyXYY,xxxyyXXY)\\
\hline
(xxxyXXY,xxyyyXYY) & (xxxYXXy,xxYYYXyy) & (xyyyXYY,xxxyXyXY)\\
\hline
(xxxYXXy,xyyyxYYYY) & (xyyyXYY,xxxxYXXXY) & (xxxyXXY,xyyyyxYYY)\\
\hline
(xxxYXXy,xxYYxyyy) & (xyyyXYY,xxxyyXXy) & (xyyyXYY,xxxxyXXXy)\\
\hline
(xyyyXYY,xxxyXyXy) & (xxxyXXY,xxyyxYYY) & (xyyyXYY,xxxYXyXY)\\
\hline
(xyyyXYY,xxxyXYXy)\\
\hline
\end{tabular}
}
\caption{Miller-Schupp pairs $(u,v)$ with equivalence
$(u,v)\sim_{\AC} (\varphi(u),\varphi(v))$ for $\varphi\in\Aut(F_2)$,
but not known if trivializable.}
\label{tab:MS-presentations2}
\end{table}

\begin{table}[h]
\centering
{\tiny
\begin{tabular}{|l|l|l|}
\hline
(XyyxYYY,xxYYYXYxYXYY) & (XyyxYYY,xxyyyXYYXyxY) & (XyyxYYY,xxYXyxyyyXY) \\
(XyyxYYY,xxYXyXyyxyy) & (XyyxYYY,xxyyyXYxYXYY) & (XyyxYYY,xyxYYYYYYXy) \\
(XyyxYYY,xyxYXyy) & (XyyxYYY,xxYYXYXYXyy) & (XyyxYYY,xyyxYYYXYYY) \\
(XyyxYYY,xyxYYXyyyyyy) & (XyyxYYY,xyXyyxYXyyxY) & (XyyxYYY,xyyyXyyXyy) \\
\hline
\end{tabular}
}
\caption{Trivializable Miller-Schupp presentations.}
\label{tab:MS-presentations3}
\end{table}

\section{Conclusion}
\label{se:conclusion}

Despite a lot of effort, we were unable to disprove any new
Akbulut--Kurby type presentations.
In fact, the numbers in Table \ref{tab:presentation_num} rows
stabilize as the value of the parameter $L$ increases, suggesting that
the $\AC$-equivalence class of $\AK(3)$ does not contain the canonical presentation,
thus supporting a common opinion that ACC does not hold.


\appendix
\section{Used ACM-moves justification}
\label{se:used_moves}
In this section we prove the one-relator groups identities used in lemmas \ref{le:AKn_phi_y_x}, \ref{le:AKn_phi_x_Y} and \ref{le:AKn_phi_x_yx} for the ACM moves. Every proof demonstrates that $c^{-1} u c (u')^{-1} = 1$ in $\langle x, y \mid v\rangle$.

\subsection{Used in lemma \ref{le:AKn_phi_y_x}} \

\noindent $c = xyx$, $u = x^k Y^{k+1}$,
$v = xyxYXY$, $u' = y^k X^{K+1}$:
\begin{align*}
& XYX\cdot x^k (Y^{k+1} \cdot xyx \cdot x^{k+1}) Y^k\\
=&XYX(x^k \cdot xy x \cdot Y^K)   \quad  (Yxyx = xy)\\
=&XYXxyx \quad (xyxY = yx)\\
=&1
\end{align*}

\subsection{Used in lemma \ref{le:AKn_phi_x_Y}} \

\noindent $u = xyXy^{k+1}xY^{k+2}$, $v = xyXyxY$,
$c = y^{k+1}$, $u' = Y^{k+1}X^k$:
\begin{align*}
&(Y^{k+1}) \cdot xyXy^{k+1}x(Y^{k+2} \cdot y^{k+1}) \cdot x^k (y^{k+1})
=(x^{k+1}yXy^{k+1})x\cdot Y\\
=&(x^k yX y^k)\cdot xY
=\ldots
=yX xY
=1
\end{align*}

\noindent $u = xyXyxYYXy$, $v = xyXy^{k+1}xY^{k+2}$,
$c = y^{k+2}XyxY$, $u' = xyXyxY$:
\begin{align*}
&yXY  xY^{k+2} \cdot xyXyxYYX(y \cdot y^{k+2}XyxY \cdot yXYxY)X\\
=& yXY (xY^{k+2} xyXy)xYYX \cdot y^{k+2} X
= yXY \cdot Y^k\cdot xY(YX y^{k+2} X)\\
=& (yX) Y^{k+1}(xY\cdot Xy^{k+1})
= Y^{k+1}\cdot y^{k+1}
= 1
\end{align*}

\noindent $u = xyXy^kXY^k$, $v = xyXyxYYXy$,
$c = xyXy$, $u' = y^{k+2}XY^{k+1}xYX$:
\begin{align*}
&(Y xYX) \cdot (xyX y^k)XY^k \cdot xyXy \cdot xyXy^{k+1}x(Y^{k+2})\\
=& XY^k xy(Xy x yXy^{k+1})x\cdot Y^3
= X(Y^k xy\cdot y)yX y^{k}\cdot x Y^3\\
=& X\cdot Y^{k-1}xy(Xyx\cdot yX y^{k}) x Y^3
= X (Y^{k-1}xy\cdot y)yX y^{k-1}\cdot x Y^3\\
=& X xy\cdot y yX\cdot x Y^3 = 1
\end{align*}

\noindent $u = x^kY^{k+1}$, $v = xyXy^kXY^k$,
$c = Y^{k-1}xY^{k-1}xYYXy$, \\
$u' = YxyyXYxYX$:
\begin{align*}
(Y&xy yX)y^{k-1}Xy^{k-1}\cdot x^k Y^{k+1} \cdot Y^{k-1}xY^{k-1}xYYXy \\
\cdot& xyXy(xYYXy)
= (y^{k-1}X)y^{k-1} x^k Y^{2k} xY^{k-1} xYYXy (xyXy) \\
=& (y^k\cdot y^{k-1}) x^k Y^{2k} xY^{k-1} xYYX(y)  \quad  (xyXy^kX = y^k)\\
=& y^k x (X y^{k} \cdot x^k) Y^{2k} xY^{k-1} xYYX  \quad  (\text{shift})\\
=& y^k x \cdot y(Xy^k \cdot x^{k-1}) Y^{2k} xY^{k-1} xYYX  \quad  (Xy^kx = yXy^k)\\
& \ldots\\
=& y^k x \cdot y^k X (y^k \cdot Y^{2k}) xY^{k-1} xYYX\\
=& y^k x (y^k X\cdot Y^{k}\cdot x) Y^{k-1} xYYX
= (y^k x) \cdot xY \cdot Y^{k-1} xY(YX)\\
=& (Xy^k\cdot x) Y^{k} xY
= (Xy^k\cdot x) Y^{k} (xY)
= y^k\cdot Y^{k}
= 1
\end{align*}

\noindent $u = xyxYXY$, $v = x^kY^{k+1}$,
$c = Y^{k}$, $u' = y^k x Y^k x YX$:
\begin{align*}
(y^k\cdot xy)xYXY \cdot Y^k \cdot xyX(y^kXY^k) = xYX(Y Y^k) xyX\cdot y^{k+1} = \\
xY(X\cdot X^k x)yX y^{k+1} = x(Y\cdot Y^{k+1}\cdot y)X y^{k+1} = \ldots = Y^{k+1} y^{k+1}
\end{align*}

\subsection{Used in lemma \ref{le:AKn_phi_x_yx}} \

\noindent $u = x^{k-1}yX^{k-1}yXY$, $v = xxyXYXy$,
$c = x^{k-3}(YX)^kY$, \\
$u'= x^{k-1} (YX)^{k} Y$:
\begin{align*}
& y(xy)^k(X^{k-3} \cdot x^{k-1})yX^{k-1}yXY \cdot
(x^{k-3}(YX)^kY \cdot  y (xy)^k X^{k-1}) \\
=& y(xy)^k\cdot x^2 yX^{k-1}(yXY\cdot X)X
= (yx)^ky x^2 yX^{k-1}\cdot XX(Y \cdot X) \\
=& \ldots = y x^2 yX^{k-1} XX\cdot x^kYX
= y x^2 yXYX
= 1
\end{align*}

\noindent $u = x^kY^{k+1}$, $v = x^{k-1}yX^{k-1}yXY$,
$c = xyX^{k-1}yXYXy$, \\
$u'= YxyxYXX$:
\begin{align*}
&(Yx yxY)x^{k-1}Y(X\cdot x^k)Y^{k+1} \cdot xyX^{k-1}yXYXy \cdot xx(yXYXy) \\
=& (x^{k-1}Yx^{k-1}Y^{k+1}) \cdot xyX^{k-1}yXYXy (xx) = \ldots\\
=& Y(x^{k-1} \cdot xyX^{k-1}yXY)Xy
= Y\cdot x\cdot Xy
= 1
\end{align*}

\noindent $u = xyxYXY$, $v = x^kY^{k+1}$,
$c = yX^{k-1}yXY$, \\
$u' = yxYx^{k-1}YX^{k-1}$:
\begin{align*}
&(yx Yx^{k-1}Y)\cdot xyxYX(Y \cdot y)X^{k-1}yXY \cdot x^{k-1}(yX^{k-1}yXY) \\
=& (x)yxY X^k yX(Y x^{k-1})
= (y)xY X^k yX\cdot (y^k)\\
=& (xY) X^k y(X \cdot x^k)
= X^k y \cdot y^k
= 1
\end{align*}

\bibliographystyle{plain}

\end{document}